\documentclass[12pt]{amsart}
\usepackage{amssymb}
\usepackage[shortlabels]{enumitem}
\usepackage[all]{xy}
\usepackage{xcolor}
\usepackage[margin=1in]{geometry} 
\usepackage{hyperref}
\usepackage{eucal}
\hypersetup{bookmarksdepth=2}
\hypersetup{colorlinks=true}
\hypersetup{linkcolor=blue}
\hypersetup{citecolor=blue}
\hypersetup{urlcolor=blue}

\makeatletter
\@addtoreset{equation}{section}
\makeatother

\numberwithin{equation}{section}
\newtheorem{theorem}[equation]{Theorem}

\newtheorem{proposition}[equation]{Proposition}
\newtheorem{lemma}[equation]{Lemma}
\newtheorem{corollary}[equation]{Corollary}

\theoremstyle{definition}
\newtheorem{remark}[equation]{Remark}

\newcommand{\bA}{\mathbf{A}}

\newcommand{\cC}{\mathcal{C}}

\newcommand{\fD}{\mathfrak{D}}

\newcommand{\cM}{\mathcal{M}}

\newcommand{\bN}{\mathbf{N}}

\newcommand{\bO}{\mathbf{O}}

\newcommand{\cP}{\mathcal{P}}

\newcommand{\bS}{\mathbf{S}}

\newcommand{\fS}{\mathfrak{S}}

\newcommand{\bV}{\mathbf{V}}

\newcommand{\arxiv}[1]{\href{http://arxiv.org/abs/#1}{{\tiny\tt arXiv:#1}}}

\newcommand{\DOI}[1]{\href{http://doi.org/#1}{\color{purple}{\tiny\tt DOI:#1}}}
\newcommand{\defn}[1]{\emph{#1}}

\let\ul\underline
\renewcommand{\phi}{\varphi}

\DeclareMathOperator{\Ind}{Ind}
\DeclareMathOperator{\Spec}{Spec}

\DeclareMathOperator{\Hom}{Hom}

\DeclareMathOperator{\Sym}{Sym}

\newcommand{\ulambda}{\ul{\lambda}}
\renewcommand{\Vec}{\mathrm{Vec}}
\newcommand{\GL}{\mathbf{GL}}

\newcommand{\umu}{\ul{\smash{\mu}}}
\newcommand{\unu}{\ul{\smash{\nu}}}

\title[Invariant theory for linearly oligomorphic groups]{The fundamental theorems of invariant theory \\ for linearly oligomorphic groups}
\date{July 9, 2026}

\author{Alessandro Danelon}
\address{Department of Mathematics, University of Michigan, Ann Arbor, MI, USA}
\email{\href{mailto:adanelon@umich.edu}{adanelon@umich.edu}}
\urladdr{\url{https://public.websites.umich.edu/~adanelon/}}

\author{Andrew Snowden}
\thanks{AS was supported by NSF grant DMS-2301871.}
\address{Department of Mathematics, University of Michigan, Ann Arbor, MI, USA}
\email{\href{mailto:asnowden@umich.edu}{asnowden@umich.edu}}
\urladdr{\url{http://www-personal.umich.edu/~asnowden/}}

\begin{document}

\begin{abstract}
In recent work, Harman and the second author introduced some new infinite dimensional algebraic groups that generalize the classical groups. In this paper, we establish versions of the first and second fundamental theorems of invariant theory for them.
\end{abstract}

\maketitle
\tableofcontents

\section{Introduction}

Let $V$ be a finite dimensional vector space over an algebraically closed field $k$ of characteristic~0 equipped with a non-degenerate symmetric bilinear form $\langle, \rangle$. The function
\begin{displaymath}
q_{i,j} \colon V^m \to k, \qquad (v_1, \ldots, v_m) \mapsto \langle v_i, v_j \rangle
\end{displaymath}
is invariant under orthogonal group $\bO(V)$. The first fundamental theorem of invariant theory states that these functions generate the ring of $\bO(V)$-invariant polynomial functions on $V^m$. The second fundamental theorem describes the algebraic relations between these functions; in particular, if $\dim{V} \ge m$ then the $q_{i,j}$ (for $i \le j$) are algebraically independent. These theorems, and their analogs for other classical groups, have been extremely influential in the development of invariant theory.

Recently, Harman and the second author \cite{homoten} introduced a class of (typically infinite dimensional) algebraic groups called \defn{linearly oligomorphic groups}, and constructed examples analogous to the classical groups. In this paper, we establish versions of the fundamental theorems for them. We describe the simplest case here.

Let $n$ be a positive integer. A \defn{symmetric $n$-space} is a vector space $V$ equipped with a symmetric multi-linear form $\omega \colon V^n \to k$. By \cite[Theorem~A]{homoten}, there is a universal homogeneous symmetric $n$-space $(V, \omega)$ of countable dimension. ``Universal'' means that any finite dimensional symmetric $n$-space $W$ embeds into $V$, and ``homogeneous'' means that any two embeddings from $W$ differ by an automorphism of $V$. Remarkably, $(V, \omega)$ is unique up to isomorphism.

Let $G$ be the automorphism group of $(V, \omega)$. Homogeneity ensures that $G$ is large: for instance, $G$ acts transitively on vectors of norm~1. In fact, by \cite[Theorem~B]{homoten}, $G$ is linearly oligomorphic, which precisely formulates the manner in which $G$ is large. We view $G$ as a close cousin of the infinite orthogonal group. This perspective is bolstered by our main result:

\begin{theorem} \label{mainthm}
Let $(V, \omega)$ and $G$ be as above. For $1 \le i_1 \le \cdots \le i_n \le m$, define
\begin{displaymath}
q_{i_1,\ldots,i_n} \colon V^m \to k, \qquad (v_1, \ldots, v_m) \mapsto \omega(v_{i_1}, \ldots, v_{i_n}).
\end{displaymath}
These functions freely generate the ring of $G$-invariant symmetric forms on $V^m$.
\end{theorem}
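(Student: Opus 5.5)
The plan is to reduce to the multi-homogeneous pieces, identify each $G$-invariant with a function of the $q$'s using homogeneity, and then prove that this function is a polynomial. The ring of symmetric forms on $V^m$ is multigraded by degree in each of the $m$ variables, and the $G$-action preserves this grading. So it suffices to treat a $G$-invariant form $\phi$ of a fixed multidegree $(d_1, \ldots, d_m)$, and to show that $\phi$ is a unique polynomial in the $q_{i_1,\ldots,i_n}$. Let $N$ be the number of multi-indices $1 \le i_1 \le \cdots \le i_n \le m$, and set
\begin{displaymath}
Q = (q_{i_1,\ldots,i_n}) \colon V^m \to k^N .
\end{displaymath}

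\emph{Step 1: $Q$ is surjective.} Given $a \in k^N$, let $W = k^m$ with basis $e_1, \ldots, e_m$. Let $\omega_a$ be the unique symmetric $n$-linear form on $W$ with $\omega_a(e_{i_1}, \ldots, e_{i_n}) = a_{i_1,\ldots,i_n}$. By universality, $(W, \omega_a)$ embeds into $(V, \omega)$, and the images of the $e_i$ form a tuple $v \in V^m$ with $Q(v) = a$. Surjectivity immediately gives algebraic independence: a nonzero polynomial $P$ with $P(Q) = 0$ would vanish on all of $k^N$, which is impossible over an infinite field. This is the ``freely'' part of Theorem~\ref{mainthm}.

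\emph{Step 2: $\phi$ factors through $Q$ on independent tuples.} Let $v, v' \in V^m$ be linearly independent tuples with $Q(v) = Q(v')$. The linear map sending $v_i \mapsto v_i'$ is then an isomorphism of symmetric $n$-spaces between the spans. By homogeneity it extends to some $g \in G$, so $\phi(v') = \phi(v)$. Hence there is a function $F \colon k^N \to k$, defined using independent preimages (these exist by Step 1, since $V$ is infinite dimensional), with $\phi = F \circ Q$ on the set of independent tuples. This set is Zariski dense in $U^m$ for any finite dimensional $U \subset V$ with $\dim U \ge m$. Therefore, once $F$ is known to be a polynomial, $\phi = F(Q)$ holds identically, because both sides are polynomial on each $U^m$.

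\emph{Step 3: $F$ is a polynomial.} This is where I expect the real difficulty. A priori $F$ is just a set-theoretic function. The approach I would try first is to construct an algebraic section of $Q$. That is, I would find a finite dimensional subspace $U \subset V$ and a polynomial map $s \colon k^N \to U^m$ with $Q \circ s = \mathrm{id}$ and $s(a)$ linearly independent for all $a$ (or at least for $a$ in a dense open set). Then $F = \phi \circ s$ is polynomial, since $\phi$ restricted to $U^m$ is polynomial.

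To build $s$, I would use universality to embed a cleverly chosen finite dimensional symmetric $n$-space. Take a space $U$ with basis $x_1, \ldots, x_m$ together with auxiliary vectors $y_{I,j}$, where the form $\omega$ on $U$ is chosen so that $\omega$ vanishes on the $x$'s alone, and each $y_{I,j}$ pairs with a prescribed tuple of $x$'s to give $1$ and pairs trivially otherwise. Then set $v_i(a) = x_i + \sum_I a_I\, y_{I,i}$, adjusted so that expanding $\omega(v_{i_1}(a), \ldots, v_{i_n}(a))$ by multilinearity gives exactly $a_{i_1,\ldots,i_n}$, with no cross terms. The unwanted cross terms have to be killed by arranging enough vanishing among the $y$'s.

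If such an exact section proves awkward, I would fall back to a weaker argument. Show that $Q|_{U^m}$ is a smooth surjective morphism with connected generic fibres for $\dim U \gg 0$. Then a regular function on $U^m$ that is constant on fibres descends to a regular function on $k^N$, at least on a dense open set. Finally, use that $F$ is bounded in degree, by homogeneity under the scalings $v_i \mapsto t_i v_i$ (under which $Q$ is weighted homogeneous), to conclude that $F$ is a polynomial everywhere.
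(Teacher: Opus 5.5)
Your proof is correct, and it takes a genuinely different route from the paper. The one step you left tentative, the section in Step~3, does exist and is even affine-linear, so you can commit to it and drop the fallback. Take a space $W$ with basis $x_1,\ldots,x_m$ together with vectors $y_{I,i}$, one for each multi-index $I=(i_1\le\cdots\le i_n)$ and each $i$ occurring in $I$. Let $\eta$ be the symmetric $n$-linear form on $W$ taking the value $1/n$ on the multiset $\{y_{I,i}\}\cup\{x_j : j\in I\smallsetminus i\}$ (one copy of $i$ removed) and $0$ on every other multiset of basis vectors; such values may be prescribed freely. Set $v_i(a)=x_i+\sum_{I\ni i}a_I\,y_{I,i}$. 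In $\eta(v_{i_1}(a),\ldots,v_{i_n}(a))$, every term with no $y$ or with at least two $y$'s vanishes. For each of the $n$ positions $k$, the one-$y$ term is $\sum_{J\ni i_k}a_J\,\eta(y_{J,i_k},x_{I\smallsetminus i_k})=a_I/n$, since only $J=I$ survives. Hence $Q(s(a))=a$, and $s(a)$ is always independent because of its $x_i$-components. Embedding $(W,\eta)$ into $V$ by universality, $F=\phi\circ s$ is a polynomial, and your homogeneity and density arguments finish the proof.

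The paper obtains Theorem~\ref{mainthm} as the case $r=1$, $\lambda_1=(n)$ of Theorem~\ref{thm:main}. For injectivity of $\Sym(\bS_{\lambda}(U)^*)\to\cP(U\otimes V)^G$, it shows that universality forces the forms $[\omega]_U$ to have infinite strength (Theorem~\ref{thm:kaz} together with Lemma~\ref{lem:strength}). It then invokes Theorem~\ref{thm:stillman}, that infinite-strength forms are algebraically independent. Surjectivity is a dimension count (Lemma~\ref{lem:main1}). That count rests on the description of $\Hom_G(V^{\otimes d},k)$ by the downward $\ulambda$-Brauer category from \cite{homoten}, transported through Schur--Weyl duality and the Cauchy decomposition.

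You replace both halves with arguments that use only the defining properties of $(V,\omega)$. Universality makes $Q$ surjective, which gives independence at once; the same observation gives Corollary~\ref{cor:univ} without strength theory. Homogeneity plus an explicit gadget gives generation, together with an explicit recipe $\phi=(\phi\circ s)(Q)$ for the polynomial expressing a given invariant.

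What the paper's route buys is uniformity over arbitrary pure tuples $\ulambda$; your gadget is specific to symmetric forms and would have to be redesigned for general Schur functors. It also buys a framework that simultaneously yields Proposition~\ref{prop:inv2}, automatic algebraicity, the fixed-point result, and the strength characterization of universality. Yours is elementary, self-contained, and constructive.
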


See \S \ref{ss:forms} for the precise definition of the ring of symmetric forms. As mentioned above, this is the simplest case of our main theorem. The general case allows for multiple forms which need not be symmetric.

There is one other result we mention. Let $\bS_{\lambda}$ be the Schur functor associated to a partition $\lambda$, and regard $\bS_{\lambda}$ as taking values in the category of sets. We show that any natural transformation $\bS_{\lambda} \to \bS_{\mu}$ is algebraic, in a certain sense. See \S \ref{ss:auto} for the precise statement, and a more general result.

\subsection{Notation}

We list the most important notation.
\begin{description}[align=right,labelwidth=2.25cm,leftmargin=!]
\item[ $k$ ] the coefficient field (algebraically closed of characteristic~0)
\item[ $\fS_n$ ] the symmetric group of order $n$
\item[ $\bS_{\lambda}$ ] the Schur functor associated to the partition $\lambda$
\item[ $S^{\lambda}$ ] the Specht module associated to the partition $\lambda$
\item[ $\bA^{\ulambda}$ ] the basic $\GL$-variety associated to the tuple $\ulambda$
\item[ $\cC_{\ulambda}$ ] the category of finite $\ulambda$-spaces
\item[ $\cP(V)$ ] the ring of homogeneous forms on $V$
\end{description}

\section{Background}

\subsection{The ring of symmetric forms} \label{ss:forms}

Let $V$ be a vector space. A \defn{symmetric $d$-form} on $V$ is a function $V \to k$ that has the form $v \mapsto \omega(v, \ldots, v)$ for some symmetric multi-linear form $\omega \colon V^d \to k$. For example, when $d=2$ we recover the notion of quadratic form. We write $\cP_d(V)$ for the space of such forms; this is naturally identified with the linear dual of $\Sym^d{V}$. We let $\cP(V)=\bigoplus_{d \ge 0} \cP_d(V)$, which is naturally a ring; we call it the \defn{ring of symmetric forms}.

Suppose $V$ has basis $\{e_i\}_{i \in I}$. Let $\{x_i\}_{i \in I}$ be the dual functionals, i.e., $x_i \colon V \to k$ is the linear map satisfying $x_i(e_j)=\delta_{i,j}$. Then an element of $\cP_d(V)$ can be represented by a (perhaps infinite) $k$-linear combination of degree $d$ monomials in the $x_i$'s. In other words, $\cP(V)$ is naturally isomorphic to the inverse limit, in the category of graded rings, of the polynomial rings $k[x_i]_{i \in J}$ as $J$ runs over the finite subsets of $I$.

\subsection{Tensor spaces}

For a vector space $V$ and a partition $\mu$, a \defn{$\mu$-form} on $V$ is a linear map $\bS_{\mu}(V) \to k$ where $\bS_{\mu}$ denotes the Schur functor. Note that if $\mu=(d)$ then a $\mu$-form can be identified with a symmetric $d$-form, i.e., an element of $\cP_d(V)$.

Let $\ulambda=[\lambda_1, \ldots, \lambda_r]$ be a tuple of partitions; we often simply call $\ulambda$ a \emph{tuple}. A \defn{$\ulambda$-space} is a pair $(V, \ul{\omega})$, where $V$ is a vector space and $\ul{\omega}=(\omega_1, \ldots, \omega_r)$ is a tuple where $\omega_i$ is a $\lambda_i$-form on $V$. A \emph{map} of $\ulambda$-spaces $(W, \ul{\eta}) \to (V, \ul{\omega})$ is an injective linear map $i \colon W \to V$ such that $i^*(\omega_i)=\eta_i$ for each $1 \le i \le r$. We say that a $\ulambda$-space $V$ is \defn{finite} or \defn{countable} if $\dim{V}$ is. We let $\cC_{\ulambda}$ (resp.\ $\hat{\cC}_{\ulambda}$) denote the category of finite (resp.\ countable) $\ulambda$-spaces. Note that $\ul{\omega}$ can be identified with a linear map $\bS_{\ulambda}(V) \to k$, where $\bS_{\ulambda}=\bigoplus_{i=1}^r \bS_{\lambda_i}$; we will sometimes use this perspective.

Let $V$ be a countable $\ulambda$-space. We say that $V$ is \defn{universal} if every finite $\ulambda$-space embeds into $V$. We say that $V$ is \defn{homogeneous} if whenever $W$ is a finite $\ulambda$-space and $\alpha, \beta \colon W \to V$ are two embeddings, there exists an automorphism $\sigma$ of $V$ such that $\beta=\sigma \circ \alpha$. We say that the tuple $\ulambda$ is \defn{pure} if each $\lambda_i$ is non-empty. The following result is fundamental to this paper.

\begin{theorem}[{\cite[Theorem~A]{homoten}}]
Suppose $\ulambda$ is a pure tuple. Then there exists a countable universal homogeneous $\ulambda$-space. Moreover, any two are isomorphic.
\end{theorem}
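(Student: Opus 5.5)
This is a Fraïssé-type statement, and I would prove it by the standard Fraïssé construction adapted to the linear setting, working in the category $\cC_{\ulambda}$ of finite $\ulambda$-spaces. The key is the \emph{amalgamation property}. Let $\alpha \colon U \to W_1$ and $\beta \colon U \to W_2$ be maps in $\cC_{\ulambda}$. We want a finite $\ulambda$-space $X$ with embeddings $W_1 \to X$ and $W_2 \to X$ agreeing on $U$. Take $X = W_1 \oplus_U W_2$, the pushout of vector spaces, with $\alpha$ and $\beta$ injective. We must then define a $\lambda_i$-form on $X$ restricting to the given forms on $W_1$ and $W_2$.

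To do this, split $X = U \oplus W_1' \oplus W_2'$, where $W_j = U \oplus W_j'$. The space $\bS_{\lambda}(X)$ decomposes as a sum over multidegrees in the three summands. A linear functional on it is determined freely by its values on each summand. The summands with no $W_2'$ factor form $\bS_\lambda(W_1)$, and those with no $W_1'$ factor form $\bS_\lambda(W_2)$; these overlap exactly in $\bS_\lambda(U)$, where the two prescribed forms agree. This follows from the polynomial-functor decomposition $\bS_\lambda(A\oplus B\oplus C)=\bigoplus \bS_{\lambda/\ldots}$-type pieces, or simply from the fact that $\bS_\lambda(W_1)+\bS_\lambda(W_2)$ is a direct summand of $\bS_\lambda(X)$ with intersection $\bS_\lambda(U)$. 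So we can prescribe the form on this subspace and extend it by zero on a complement. I expect this functoriality check to be the main technical point.

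Joint embedding is the special case $U=0$. Hereditarity and the existence of only "countably many" isomorphism types is the delicate part: over an uncountable field $k$ there are uncountably many finite $\ulambda$-spaces, so the naive Fraïssé argument fails. This is the main obstacle. I would instead build $V$ directly as a union $V_0 \subset V_1 \subset \cdots$ of finite-dimensional spaces, at each stage realizing one extension problem. An extension problem consists of a finite-dimensional subspace $A \subset V_n$ together with a one-dimensional extension $A \subset A'$ carrying forms. The difficulty is that there are uncountably many such problems, while $V$ should have countable dimension.

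Purity is what saves us. Adjoining a new vector $x$ to $A$, the extension data are the values of the forms on the pieces of $\bS_\lambda(A\oplus kx)$ involving $x$, since every $\lambda_i$ is nonempty. Infinitely many extensions can therefore be amalgamated freely over $A$, using the amalgamation construction above, iterated and taken in direct limits. I would handle this by a "generic" extension argument in the style of \cite{homoten}, where the realization of all one-point extensions is arranged cleverly. Failing that, I would simply cite \cite[Theorem~A]{homoten}, since this statement is exactly that theorem.

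For uniqueness, I would run the back-and-forth argument. Let $V$ and $V'$ be countable universal homogeneous spaces with bases $(e_n)$ and $(e'_n)$. First, homogeneity together with universality gives the extension property: if $W\subset W^+$ is finite and $W\to V$ is an embedding, it extends to $W^+\to V$. To see this, embed $W^+$ into $V$, then use homogeneity to move the restriction to $W$ onto the given embedding. Then alternately extend a partial isomorphism between finite subspaces so that it covers $e_n$ on one side and $e'_n$ on the other. The union of these partial isomorphisms is an isomorphism $V \cong V'$.
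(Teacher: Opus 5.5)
The paper quotes this theorem from Harman--Snowden and gives no proof, so I am judging your outline on its own terms. Two parts are correct. Your amalgamation step works: take the free amalgam, with the forms vanishing on the mixed components of $\bS_{\lambda}(U\oplus W_1'\oplus W_2')$. Your back-and-forth uniqueness argument also works.

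The gap is \emph{existence}, which is the only hard part when $k$ is uncountable. You name the obstruction but do not get past it:
\begin{itemize}
\item Realizing one extension problem per stage handles only countably many of uncountably many problems.
\item Freely amalgamating one-point extensions over $A$ adds one new basis vector per extension. Doing all of them destroys countable dimension, and doing countably many misses most of them.
\item ``Arranged cleverly'' and ``cite Theorem~A'' are not arguments.
\end{itemize}
Purity is also not what resolves this. Purity enters elsewhere: since $\bS_{\lambda_i}(0)=0$, it makes the zero space an initial object of $\cC_{\ulambda}$. You tacitly use this both in ``joint embedding is the case $U=0$'' and when building every finite space from $0$ by one-point extensions. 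If some $\lambda_i$ were empty, that form would be a scalar preserved by every map, and no universal space could exist.

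The missing idea is that a single finite extension $X\subset Y$ can absorb \emph{all} one-point extensions of $X$ at once, because the realizing vector may be a linear combination of new vectors. Here is how to do it.
\begin{enumerate}
\item The data of a one-point extension $X\oplus kz$ is a point of a finite-dimensional space $E=\bigoplus_i\bigoplus_{j\ge1}E_{i,j}$. Here $E_{i,j}$ is the dual of the part of $\bS_{\lambda_i}(X\oplus kz)$ of degree $j$ in $z$. Every point of $E$ occurs, and replacing $z$ by $cz$ multiplies the $E_{i,j}$-component by $c^j$.
\item Choose a basis of $E$ adapted to this grading. For each basis vector $e$, let $X\oplus kz_e$ be the extension with data $e$. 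Add one more extension $X\oplus kz_0$ with data $0$, and let $Y$ be the free amalgam over $X$ of all of these.
\item Decompose $\bS_{\lambda_i}(Y)$ by multidegree in the new lines. This shows that the data of $z_0+\sum_e c_ez_e$ is $\sum_e c_e^{j_e}e$, where $j_e\ge1$ is the degree of $e$. Since $k$ is algebraically closed, this hits every point of $E$, and the vector never lies in $X$.
\item Set $X_0=0$, let $X_{n+1}$ be the space $Y$ built from $X=X_n$, and put $V=\bigcup_n X_n$, which has countable dimension.
\item Take a finite $W\subset X_n$ and a one-point extension $W\subset W'$. Your amalgam $X_n\oplus_W W'$ is a one-point extension of $X_n$, so it embeds into $X_{n+1}$ over $X_n$. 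Hence $V$ has the extension property.
\end{enumerate}
Universality then follows by induction on dimension, starting from $0$ (purity is used here). Homogeneity follows by the same back-and-forth you use for uniqueness, run inside $V$. With this lemma supplied, the rest of your outline goes through.
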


We will be concerned with the automorphism group of the countable universal homogeneous $\ulambda$-space. As stated in the introduction, this group is very large: indeed, it is linearly oligomorphic by \cite[Theorem~B]{homoten}. As we will not need this result, we do not recall the definition of linearly oligomorphic.

\subsection{From $\lambda$-forms to symmetric forms} \label{ss:convert}

Let $\omega$ be a $\lambda$-form on $V$ with $\vert \lambda \vert =d$. We now describe how to convert $\omega$ into a symmetric $d$-form on $V^m$. Put $U=k^m$. Given $\alpha \in \bS_{\lambda}(U)^*$, let $\alpha \odot \omega$ be the composition
\begin{displaymath}
\xymatrix@C=3em{
\Sym^d(U \otimes V) \ar[r] &
\bS_{\lambda}(U) \otimes \bS_{\lambda}(V) \ar[r]^-{\alpha \otimes \omega} & k }
\end{displaymath}
where the first map is the natural projection from the Cauchy decomposition. We thus have a natural map
\begin{displaymath}
\bS_{\lambda}(U)^* \to \cP_d(U \otimes V), \quad \alpha \mapsto \alpha \odot \omega.
\end{displaymath}
We can convert this map into an element
\begin{displaymath}
[\omega]_U \in \bS_{\lambda}(U) \otimes \cP_d(U \otimes V).
\end{displaymath}
Note that $\alpha \odot \omega$ is the pairing of $\alpha$ with $[\omega]_U$. We often think of $[\omega]_U$ as a tuple of $\dim{\bS_{\lambda}(U)}$ symmetric $d$-forms on $V^m$. This interpretation depends on the choice of a basis of $\bS_{\lambda}(U)$, but this choice will be irrelevant in our applications.

Let $\ulambda=[\lambda_1, \ldots, \lambda_r]$ be a tuple, and let $(V, \ul{\omega})$ be a $\ulambda$-space. Let $e_i$ be the dimension of $\bS_{\lambda_i}(U)$, and let $\umu$ be the tuple $[(d_1)^{e_1}, \ldots, (d_r)^{e_r}]$ where $d_i = \vert \lambda_i \vert$; here the notation $(d_i)^{e_i}$ means that the partition $(d_i)$ is repeated $e_i$ times. Write $[\ul{\omega}]_U$ for the concatenation of the tuples $[\omega_1]_U, \ldots, [\omega_r]_U$. Then $(V, [\ul{\omega}]_U)$ is a $\umu$-space. This construction defines a functor $\cC_{\ulambda} \to \cC_{\umu}$. Write $\ell(\lambda)$ for the number of rows in the partition $\lambda$.

\begin{proposition} \label{prop:full}
If $m \ge \ell(\lambda_i)$ for each $i$ then the above functor is fully faithful.
\end{proposition}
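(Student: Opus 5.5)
The plan is to reduce full faithfulness to two facts about the assignment $\omega \mapsto [\omega]_U$: it is natural in $V$, and it is injective whenever $\bS_{\lambda}(U) \ne 0$.

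\emph{Faithfulness.} The functor $\cC_{\ulambda} \to \cC_{\umu}$ does not change the underlying vector space or the underlying linear map of a morphism; it only replaces the tuple of forms $\ul{\omega}$ by $[\ul{\omega}]_U$. Faithfulness is therefore immediate. For fullness, let $(W,\ul{\eta})$ and $(V,\ul{\omega})$ be finite $\ulambda$-spaces, and let $i \colon W \to V$ be an injective linear map with $i^*[\omega_j]_U = [\eta_j]_U$ for each $j$. We must show that $i^*\omega_j = \eta_j$ for each $j$. This can be done one partition at a time, so fix $\lambda=\lambda_j$ with $|\lambda|=d$.

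\emph{Naturality and linearity.} For a linear map $i \colon W \to V$, consider the diagram built from the Cauchy projections
\begin{displaymath}
\Sym^d(U \otimes W) \to \bS_{\lambda}(U) \otimes \bS_{\lambda}(W), \qquad \Sym^d(U \otimes V) \to \bS_{\lambda}(U) \otimes \bS_{\lambda}(V).
\end{displaymath}
These projections are natural in the second variable, so the square formed with $\Sym^d(1 \otimes i)$ and $1 \otimes \bS_{\lambda}(i)$ commutes. Composing with $\alpha \otimes \omega$ gives $(1 \otimes i)^*(\alpha \odot \omega) = \alpha \odot i^*\omega$ for every $\alpha$. Equivalently, $i^*[\omega]_U = [i^*\omega]_U$, where on the left $i^*$ denotes pullback of symmetric forms along $1 \otimes i \colon U \otimes W \to U \otimes V$. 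The map $\omega \mapsto [\omega]_U$ is also visibly linear. Hence the hypothesis becomes $[i^*\omega - \eta]_U = 0$, and fullness reduces to the following claim.

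\emph{Main step: injectivity.} The claim is that if $\theta \in \bS_{\lambda}(W)^*$ satisfies $[\theta]_U = 0$, then $\theta = 0$. This is where the hypothesis $m \ge \ell(\lambda)$ is used: it is exactly the condition that $\bS_{\lambda}(U) \ne 0$ for $U = k^m$. Pick a nonzero $\alpha \in \bS_{\lambda}(U)^*$. Then $\alpha \odot \theta = 0$ says that $\alpha \otimes \theta$ vanishes after composing with the Cauchy projection. That projection is surjective, being the projection onto a direct summand of $\Sym^d(U \otimes W)$. So $\alpha \otimes \theta = 0$ as a functional on $\bS_{\lambda}(U) \otimes \bS_{\lambda}(W)$. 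Since $\alpha \ne 0$, this forces $\theta = 0$: evaluate on $u \otimes x$ with $\alpha(u) = 1$ to get $\theta(x) = 0$ for all $x$.

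I expect no serious obstacle. The only points needing care are the correct direction of the naturality of the Cauchy decomposition and the identification of $m \ge \ell(\lambda)$ with the nonvanishing of $\bS_{\lambda}(k^m)$. Applying the injectivity claim to each $\lambda_j$ gives $i^*\omega_j = \eta_j$ for all $j$, so $i$ is a map of $\ulambda$-spaces, which proves fullness.
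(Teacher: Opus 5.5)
Your proof is correct and follows essentially the same route as the paper. Faithfulness holds by construction, and fullness reduces to recovering a $\lambda$-form $\omega$ from $[\omega]_U$ when $\bS_{\lambda}(U)\neq 0$. The paper does this with an explicit left inverse $x \ast f$ built from the Schur--Weyl inclusion $\bS_{\lambda}(U)\otimes\bS_{\lambda}(V)\to\Sym^d(U\otimes V)$, so that $x\ast(\alpha\odot\omega)=\alpha(x)\,\omega$; you use the equivalent dual fact that the Cauchy projection is surjective.

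The paper uses the naturality $(1\otimes i)^*[\omega]_U=[i^*\omega]_U$ silently, and you spell it out. One caveat, which also applies to the paper: your opening remark that the functor does not change the underlying vector space conflicts with your own later use of $1\otimes i$, since the forms $[\ul{\omega}]_U$ live on $U\otimes V$. What both arguments actually establish is that $i$ is a map of $\ulambda$-spaces whenever $1\otimes i$ preserves these forms, and that is the sense of fullness the paper intends.
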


\begin{proof}
The functor is faithful by construction. Suppose $f$ is a symmetric $d$-form on $V^m$ and $x \in \bS_{\lambda}(U)$. We can then build a $\lambda$-form on $V$ as the composition
\begin{displaymath}
\xymatrix{
\bS_{\lambda}(V) \ar[r]^-{x \otimes -} &
\bS_{\lambda}(U) \otimes \bS_{\lambda}(V) \ar[r] &
\Sym^d(U \otimes V) \ar[r]^-f & k }
\end{displaymath}
The second map is the natural inclusion coming from Schur--Weyl duality. Denote this form by $x \ast f$. Now, if $\omega$ is a $\lambda$-form and $\alpha \in \bS_{\lambda}(U)^*$, then one readily verifies that
\begin{displaymath}
x \ast (\alpha \odot \omega) = \alpha(x) \cdot \omega.
\end{displaymath}
This shows that we can recover $\omega$ from $[\omega]_U$ provided $m \ge \ell(\lambda)$; the final condition is needed to ensure $\bS_{\lambda}(U)$ is non-zero. In particular, if $(W, \ul{\eta})$ and $(V, \ul{\omega})$  are $\ulambda$-spaces and $f \colon W \to V$ is an injective linear map such that $f^*([\ul{\eta}]_U)=f^*([\ul{\omega}]_U)$ then necessarily $f^*(\ul{\eta})=f^*(\ul{\omega})$. This exactly means that the functor is full.
\end{proof}

\subsection{$\GL$-varieties}

Let $\bV=\bigcup_{n \ge 1} k^n$ and let $\GL=\GL(\bV)$. A representation of $\GL$ is \defn{polynomial} if it decomposes into a sum of $\bS_{\lambda}(\bV)$'s. A \defn{$\GL$-algebra} is a commutative algebra equipped with an action of $\GL$ by algebra homomorphisms under which it forms a polynomial representation. A basic example of a $\GL$-algebra is $R_{\ulambda} = \Sym(\bS_{\ulambda}(\bV))$. A $\GL$-algebra is \defn{finitely $\GL$-generated} if it is a quotient of $R_{\ulambda}$ for some tuple $\ulambda$. A \defn{$\GL$-variety} is the spectrum of a reduced finitely $\GL$-generated $\GL$-algebra. We let $\bA^{\ulambda}=\Spec(R_{\ulambda})$. This is the basic example of a $\GL$-variety: indeed, by definition, every $\GL$-variety is a closed $\GL$-subvariety of some $\bA^{\ulambda}$. See \cite{polygeom} for additional details.

The category of polynomial representations of $\GL$ is equivalent to the category of polynomial functors, with $\bS_{\lambda}(\bV)$ corresponding to $\bS_{\lambda}$. If $M$ is a polynomial representation and $V$ is a vector space, we write $M\{V\}$ for the value of the polynomial functor associated to $M$ on $V$. If $X=\Spec(R)$ is a $\GL$-variety, we write $X\{V\}$ for $\Spec(R\{V\})$. Note that $\bA^{\ulambda}\{V\}$ is the linear dual of $\bS_{\ulambda}(V)$, i.e., the space of $\ulambda$-structures on $V$. For this reason, $\GL$-varieties are closely related to $\ulambda$-spaces. This connection is explored in detail in \cite{homoten2}.

\section{Calculation of invariants} \label{s:inv}

Let $\ulambda=[\lambda_1, \ldots, \lambda_r]$ be a pure tuple, let $(V, \ul{\omega})$ be the universal homogeneous $\ulambda$-space, and let $G$ be its automorphism group; this notation is in effect for the entirety of \S \ref{s:inv}. The purpose of this section is to compute the $G$-invariant functionals on tensor powers of $V$. We formulate the answer in two ways, and then give some consequences of the calculation.

\subsection{The initial calculation}

Put $d_i=\vert \lambda_i \vert$ for $1 \le i \le r$. For a partition $\mu$ of $n$, let $S^{\mu}$ denote the corresponding Specht module, which is an irreducible $\fS_n$-module. Given a functional $\alpha \in (S^{\lambda_i})^*$, we obtain a linear map
\begin{displaymath}
\xymatrix@C=3em{
V^{\otimes d_i} \ar[r] & S^{\lambda_i} \otimes \bS_{\lambda_i}(V) \ar[r]^-{\alpha \otimes \omega_i} & k, }
\end{displaymath}
where the first map is the natural projection coming from Schur--Weyl duality. This map is clearly $G$-invariant. Using the self-duality of the Specht module, we thus have a map
\begin{displaymath}
S^{\lambda_i} \to \Hom_G(V^{\otimes d_i}, k).
\end{displaymath}
More generally, if we have an expression $d=m_1d_1+\cdots+m_rd_r$, then we can tensor together maps of the above form to get a map
\begin{displaymath}
(S^{\lambda_1})^{\otimes m_1} \otimes \cdots \otimes (S^{\lambda_r})^{\otimes m_r} \to \Hom_G(V^{\otimes d}, k).
\end{displaymath}
The target above is naturally a representation of the symmetric group $\fS_d$, while the source is a representation of the subgroup\footnote{Here the wreath product $\fS_d \wr \fS_m$ is defined as the semi-direct product $\fS^d \rtimes \fS_m$. It is a subgroup of $\fS_{dm}$, with the $i$th copy of $\fS_d$ permuting the $i$th block of size $d$ in $[dm]$, and $\fS_m$ permuting the blocks.} $H(\ul{m})=(\fS_{d_1} \wr \fS_{m_1}) \times \cdots \times (\fS_{d_r} \wr \fS_{m_r})$. By Frobenius reciprocity, we thus get a map of $\fS_d$-representations
\begin{displaymath}
\beta_{\ul{m}} \colon E_{\ul{m}} \to \Hom_G(V^{\otimes d}, k), \qquad
E_{\ul{m}} = \Ind_{H(\ul{m})}^{\fS_d} \left( (S^{\lambda_1})^{\otimes m_1} \otimes \cdots \otimes (S^{\lambda_r})^{\otimes m_r} \right).
\end{displaymath}
We now take the direct sum over $\ul{m}$ to obtain a map
\begin{displaymath}
\beta_d \colon E_d \to \Hom_G(V^{\otimes d}, k), \qquad E_d = \bigoplus_{\ul{m}} E_{\ul{m}}.
\end{displaymath}
Here $\ul{m}$ runs over elements of $\bN^r$ such that $d=m_1d_1+\cdots+m_rd_r$.

\begin{proposition} \label{prop:inv1}
The map $\beta_d$ is an isomorphism.
\end{proposition}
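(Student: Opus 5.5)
Both sides are $\fS_d$-representations, and I will prove that $\beta_d$ is an isomorphism by proving injectivity and surjectivity separately. The approach uses homogeneity to identify $G$-invariant functionals with functions on a space of orbits, and universality to separate points.

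\emph{Reformulating the target.} A $G$-invariant functional $\phi \colon V^{\otimes d}\to k$ is the same as a multilinear $G$-invariant function on $V^d$. Take $U=k^d$ and write $\phi$ in terms of the tuple $(v_1,\dots,v_d)$. By homogeneity, the $G$-orbit of a $d$-tuple is determined by the isomorphism type of the $\ulambda$-space structure induced on a finite-dimensional space $W$ mapping to the span of the $v_i$. More precisely, pull back $\ul\omega$ along the map $k^d\to V$, $e_j\mapsto v_j$. This gives a point $\eta\in\bA^{\ulambda}\{k^d\}=\bS_{\ulambda}(k^d)^*$. Since the map need not be injective, I will apply homogeneity to the image subspace with the restricted forms. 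Two $d$-tuples then lie in the same $G$-orbit if and only if they have the same linear relations and the same pulled-back $\eta$. The pulled-back $\eta$ determines the restricted forms on the span, and the tuple's relations are encoded by the map from $k^d$.

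Universality shows that every $\eta\in\bS_{\ulambda}(k^d)^*$ arises this way. In fact it arises with $v_1,\dots,v_d$ linearly independent: embed $(k^d,\eta)$ into $V$. Hence a $G$-invariant $\phi$ is determined by its restriction to linearly independent tuples, and on those it is an arbitrary function $F(\eta)$ on $\bS_{\ulambda}(k^d)^*$. Linearly independent tuples are dense in the appropriate sense, because multilinearity forces $\phi$ to be determined by its values on them (for instance by perturbing within a finite-dimensional subspace). The crucial point is that multilinearity of $\phi$ in the $v_j$ translates into a condition on $F$. Precompose with the diagonal torus acting on $k^d$; this rescales the $v_j$. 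Multilinearity says $F$ is homogeneous of multidegree $(1,\dots,1)$ under the torus action on $\bS_{\ulambda}(k^d)^*$.

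\emph{The main obstacle} is showing that such an $F$ is a polynomial. A priori $F$ is only a set-theoretic function. I would use additivity in each variable. Fix all $v_j$ except $v_1$, and vary $v_1$ in a finite-dimensional subspace where the tuple stays independent. Then $\phi$ is linear in $v_1$, while $\eta$ depends polynomially on $v_1$. Because the $\eta$ are realized by independent tuples in a universal space, one can choose the finite-dimensional test subspaces freely. I would argue that $F$ is polynomial on each line in $\eta$-space, of bounded degree, and then conclude by a standard argument that $F$ is polynomial. The cleanest route is probably to reduce, via Proposition~\ref{prop:full}, to symmetric forms where the coordinates are explicit.

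Granting this, $F$ lies in the multidegree-$(1,\dots,1)$ part of $\Sym(\bS_{\ulambda}(k^d))$. By the Cauchy formula and Schur--Weyl duality, this weight space is exactly $\bigoplus_{\ul m}$ of the multilinear components of $\prod_i\Sym^{m_i}(\bS_{\lambda_i}(k^d))$. Taking the $(1,\dots,1)$-weight space of a polynomial functor of degree $d$ recovers the corresponding $\fS_d$-representation. For $\Sym^{m}(\bS_{\lambda})$ this representation is $\Ind_{\fS_{|\lambda|}\wr\fS_m}^{\fS_{m|\lambda|}}(S^{\lambda})^{\otimes m}$. So the weight space is precisely $E_d$, and the identification matches $\beta_d$ by construction, which gives surjectivity.

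Injectivity follows from the same identification. The map $E_d\to\{F\}$ is the inclusion of polynomial functions into all functions on $\bS_{\ulambda}(k^d)^*$. By universality every point of this space is realized, so a nonzero polynomial gives a nonzero invariant.
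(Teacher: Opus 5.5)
Your reductions are sound, and they give a self-contained route that differs from the paper's, which simply quotes the Harman--Snowden identification of $\Hom_G(V^{\otimes n},V^{\otimes m})$ with morphisms in the downward $\ulambda$-Brauer category. Homogeneity classifies $d$-tuples by their linear relations and the pulled-back $\eta$. Universality realizes every $\eta$ by an independent tuple, and multilinearity makes $\phi$ determined by $F$. The torus forces weight $(1,\ldots,1)$, and that weight space of $\Sym(\bS_{\ulambda}(k^d))$ is $E_d$.

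However, the step you call the main obstacle, polynomiality of $F$, is the crux of the proposition, and you do not prove it. Torus homogeneity alone does not give it. For $\ulambda=[(2)]$ and $d=2$, take any $h\colon k\to k$ and set $F=q_{12}\,h(q_{11}q_{22}/q_{12}^2)$, with $F=0$ where $q_{12}=0$; every such $F$ has the right weight. Your sketched mechanism does not produce polynomiality either. Fixing $v_2,\ldots,v_d$ and moving $v_1$ only tells you that $F\circ p$ is linear, where $p(v_1)=\eta(v_1,v_2,\ldots,v_d)$. This map freezes every component of $\eta$ not involving the first slot, and it is non-linear (of degree up to $d_i$) in $v_1$. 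So it says nothing about $F$ along a line in $\eta$-space. What you need is the reverse: for each line, a polynomial family of independent tuples whose invariant traces out that line. Passing to symmetric forms via Proposition~\ref{prop:full} does not change this.

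Here is one way to supply it.
\begin{itemize}
\item Given $\eta_0,\eta_1\in\bS_{\ulambda}(k^d)^*$, let $Z=A\oplus B$ with $A=B=k^d$. Let $h_s=\mathrm{id}_A\oplus s\,\mathrm{id}_B$ and $g_t(x)=(x,tx)$, so $g_t=h_t\circ g_1$. Split any form on $Z$ into $h_s$-eigencomponents $\theta^{(j)}$ (degree $j$ in $B$).
\item Define $\theta$ componentwise by $\theta^{(0)}=\pi_A^*\eta_0$, $\theta^{(1)}=|\lambda_i|^{-1}\bigl((\pi_A+\pi_B)^*\eta_1\bigr)^{(1)}$, and $\theta^{(j)}=0$ for $j\ge 2$. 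This uses purity and characteristic~$0$.
\item Since $g_1^*h_s^*(\pi_A+\pi_B)^*\eta_1=(1+s)^{|\lambda_i|}\eta_1$, comparing coefficients of $s$ gives $g_t^*\theta=\eta_0+t\eta_1$ for all $t$.
\item Embed $(Z,\theta)$ into $V$ by universality via $\iota$. With $e_j$ the standard basis of $k^d$, the tuples $v_j(t)=\iota(g_t(e_j))$ are independent for every $t$ and have invariant $\eta_0+t\eta_1$.
\item By multilinearity, $F(\eta_0+t\eta_1)=\phi(v_1(t),\ldots,v_d(t))$ is a polynomial in $t$ of degree $\le d$.
\item A function on $k^N$ that is polynomial of degree $\le d$ on every coordinate line is a polynomial, by Lagrange interpolation in the last variable and induction on $N$. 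Hence $F$ is a polynomial.
\end{itemize}

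With this in place the rest of your argument works. You should still verify, rather than assert, that your identification of the weight space with $E_d$ is the one induced by $\beta_d$. Equivalently, since the dimensions now agree, you should check that $\beta_d$ is injective.
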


\begin{proof}
This is a reformulation of results from \cite{homoten} and \cite{tcares}. Precisely, in \cite[\S 5.2]{tcares}, we introduced the downwards $\ulambda$-Brauer category $\fD(\ulambda)$, a generalization of the downwards Brauer category from \cite{infrank}. By \cite[Lemma~4.12]{homoten} (see also \cite[Remark~4.14]{homoten}), there is a natural isomorphism
\begin{displaymath}
\Hom_G(V^{\otimes n}, V^{\otimes m}) = \Hom_{\fD(\ulambda)}([n], [m]).
\end{displaymath}
The map $\beta_d$ is exactly this isomorphism when $n=d$ and $m=0$.
\end{proof}

\subsection{A reformulation}

It will be useful to reformulate Proposition~\ref{prop:inv1}. Let $\umu$ be a tuple and suppose $f \colon \bA^{\ulambda} \to \bA^{\umu}$ is a map of $\GL$-varieties. Then $f$ induces a $G$-equivariant map $\bA^{\ulambda}\{V\} \to \bA^{\umu}\{V\}$, under which $\ul{\omega}$ maps to a $G$-invariant functional $\bS_{\umu}(V) \to k$. We therefore have a natural map
\begin{displaymath}
\gamma_{\umu} \colon \Hom(\bA^{\ulambda}, \bA^{\umu}) \to \Hom_G(\bS_{\umu}(V), k)
\end{displaymath}
The following is the result we require:

\begin{proposition} \label{prop:inv2}
The map $\gamma_{\umu}$ is an isomorphism.
\end{proposition}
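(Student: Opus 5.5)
We reduce to Proposition~\ref{prop:inv1} by computing both sides one summand at a time. Since $\bA^{\umu}=\bA^{\mu_1}\times\cdots\times\bA^{\mu_s}$ and $\bS_{\umu}=\bigoplus_j \bS_{\mu_j}$, both the source and the target of $\gamma_{\umu}$ split as products over $j$. The map $\gamma_{\umu}$ is compatible with these splittings. So it suffices to treat $\umu=[\mu]$ with a single partition $\mu$ of size $d$.

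\textbf{The source.} A map of $\GL$-varieties $\bA^{\ulambda}\to\bA^{\mu}$ is the same as a $\GL$-algebra map $\Sym(\bS_\mu(\bV))\to R_{\ulambda}$. Such a map is determined by a $\GL$-equivariant linear map $\bS_\mu(\bV)\to R_{\ulambda}$, i.e., by an element of $\Hom_{\GL}(\bS_\mu(\bV), R_{\ulambda})$. We decompose
\[
R_{\ulambda}=\bigotimes_i \Sym(\bS_{\lambda_i}(\bV))=\bigoplus_{\ul m}\bigotimes_i \Sym^{m_i}(\bS_{\lambda_i}(\bV)),
\]
and only the degree $d$ part, $\sum m_id_i=d$, contributes. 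Under Schur--Weyl duality, $\bigotimes_i\Sym^{m_i}(\bS_{\lambda_i}(\bV))$ corresponds to the polynomial functor $\bS_{E_{\ul m}}(\bV)=(E_{\ul m}\otimes \bV^{\otimes d})_{\fS_d}$. This holds because $\Sym^{m}(\bS_{\lambda}(\bV))$ is the $\fS_{d}\wr\fS_m$-coinvariant construction applied to $(S^{\lambda})^{\otimes m}$, and induction realizes the tensor product. Hence
\[
\Hom(\bA^{\ulambda},\bA^\mu)\cong\Hom_{\fS_d}(S^\mu,E_d)
\]
up to the self-duality of Specht modules.

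\textbf{The target.} Since $\bS_\mu(V)=(S^\mu\otimes V^{\otimes d})_{\fS_d}$, we get
\[
\Hom_G(\bS_\mu(V),k)=\Hom_{\fS_d}\bigl(S^\mu,\Hom_G(V^{\otimes d},k)\bigr),
\]
again using self-duality.

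\textbf{Comparing the two.} Proposition~\ref{prop:inv1} identifies $\Hom_{\fS_d}(S^\mu,E_d)$ with $\Hom_{\fS_d}(S^\mu,\Hom_G(V^{\otimes d},k))$ via $\beta_d$. It remains to check that, under the two identifications above, $\gamma_{[\mu]}$ is exactly the map induced by $\beta_d$. This is where I expect the main work. We trace a generator: a monomial in the coordinates of $\bS_{\lambda_i}(\bV)^*$ pulls back, on $\ul\omega$, to a tensor product of the functionals $V^{\otimes d_i}\to S^{\lambda_i}\otimes\bS_{\lambda_i}(V)\to k$. These are precisely the building blocks used to define $\beta_{\ul m}$, and the induction and symmetrization steps match on both sides by naturality of Frobenius reciprocity. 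Since the identifications hold for every $\mu$, and both sides are additive and functorial in $\mu$, the check is essentially bookkeeping. The care needed is in confirming that the Cauchy/Schur--Weyl projections used to define $\beta$ agree with those implicit in evaluating a $\GL$-map at $V$, so that no nonzero scalar ambiguity breaks bijectivity. Even if such scalars appear, they are nonzero on each isotypic piece and do not affect the isomorphism.
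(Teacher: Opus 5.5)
The gap is at the step you flag yourself. Your reduction to a single $\mu$ and your computations of source and target are fine. Together with Proposition~\ref{prop:inv1} they give exactly what the paper gets from the same ingredients: both sides of $\gamma_{\mu}$ are finite dimensional of dimension $\dim\Hom_{\fS_d}(S^{\mu},E_d)$.

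Equal dimensions only reduce the problem to injectivity of $\gamma_{\mu}$. Your argument for that is the compatibility with $\beta_d$, which you assert as bookkeeping rather than check. Concretely, evaluation at $\ul\omega$ on $\Sym(\bS_{\ulambda}(V))_d\cong(E_d\otimes V^{\otimes d})_{\fS_d}$ defines an $\fS_d$-map $\epsilon\colon E_d\to\Hom_G(V^{\otimes d},k)$. Under your identifications $\gamma_{\mu}$ becomes $\phi\mapsto\epsilon\circ\phi$. What you need is that $\epsilon$ is injective, e.g.\ that it agrees with $\beta_d$ up to an automorphism of $E_d$.

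Your closing remark about scalars does not supply this. A priori the discrepancy between $\epsilon$ and $\beta_d$ is just some $\fS_d$-endomorphism of $E_d$, and $\Hom_{\fS_d}(S^{\mu},E_d)$ can have dimension greater than one. Whether that endomorphism is invertible is exactly the point at issue. The route can be completed, as the paper itself remarks. You would compare the normalizations of the Schur--Weyl maps $V^{\otimes d_i}\to S^{\lambda_i}\otimes\bS_{\lambda_i}(V)$ and $S^{\lambda_i}\otimes V^{\otimes d_i}\to\bS_{\lambda_i}(V)$ and track them through the inductions. That is real work you have not done.

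The paper avoids this with an idea missing from your proposal: get injectivity directly from universality. Suppose $f,g\colon\bA^{\ulambda}\to\bA^{\mu}$ satisfy $f(\ul\omega)=g(\ul\omega)$, and let $(W,\ul\eta)$ be any finite $\ulambda$-space. Choose an embedding $i\colon W\to V$ with $i^*\ul\omega=\ul\eta$. Naturality gives $f(\ul\eta)=i^*f(\ul\omega)=i^*g(\ul\omega)=g(\ul\eta)$. So $f$ and $g$ agree on $\bA^{\ulambda}\{W\}$ for every finite dimensional $W$, hence $f=g$. Combined with the dimension count, this finishes the proof with no control over normalizations at all. Since it holds for every $\mu$, it also shows that your $\epsilon$ is injective.
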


\begin{proof}
It suffices to treat the case where $\umu$ consists of a single partition $\mu$. Let $d=\vert \mu \vert$. By Proposition~\ref{prop:inv1}, we have an isomorphism
\begin{displaymath}
\Hom_G(\bS_{\mu}(V), k) \cong \Hom_{\fS_d}(S^{\mu}, E_d).
\end{displaymath}
We now apply Schur--Weyl duality, which we regard as an equivalence between the category of $\fS_d$-modules and the category of degree $d$ polynomial functors. By definition, Schur--Weyl duality takes the Specht module $S^{\mu}$ to the Schur functor $\bS_{\mu}$. Using standard properties of Schur--Weyl duality, we see that $E(\ul{m})$ is taken to $\Sym^{m_1}(\bS_{\lambda_1}) \otimes \cdots \otimes \Sym^{m_r}(\bS_{\lambda_r})$. It follows that $E_d$ is taken to the degree $d$ piece of $\Sym(\bS_{\ulambda})$. We thus obtain isomorphisms
\begin{displaymath}
\Hom_{\fS_d}(S^{\mu}, E_d) = \Hom(\bS_{\mu}, \Sym(\bS_{\ulambda})) =
\Hom(\Sym(\bS_{\mu}), \Sym(\bS_{\ulambda})) = \Hom(\bA^{\ulambda}, \bA^{\mu}).
\end{displaymath}
Here the second $\Hom$ is taken in the category of polynomial functors, the third in the category of algebras internal to polynomial functors, and the fourth in the category of $\GL$-varieties. The first isomorphism above is Schur--Weyl duality, the second is the standard adjunction for $\Sym$, and the third comes from evaluating on $\bV$ and applying $\Spec$.

The above paragraph produces an isomorphism
\begin{displaymath}
\Hom(\bA^{\ulambda}, \bA^{\mu}) = \Hom_G(\bS_{\mu}(V), k)
\end{displaymath}
One could trace through the construction and show that this map coincides with $\gamma_{\mu}$. We will take a different approach. The above isomorphism shows that the source and target of $\gamma_{\mu}$ have the same dimension. It thus suffices to show that $\gamma_{\mu}$ is injective. Suppose that $f$ and $g$ are two maps $\bA^{\ulambda} \to \bA^{\mu}$ with $\gamma_{\mu}(f)=\gamma_{\mu}(g)$, i.e., $f(\omega)=g(\omega)$. Let $(W, \ul{\eta})$ be a finite $\ulambda$-space. Since $(V, \ul{\omega})$ is universal, there is an injective linear map $i \colon W \to V$ such that $i^*(\ul{\omega})=\ul{\eta}$. It follows that $f(\ul{\eta})=g(\ul{\eta})$. Thus $f$ and $g$ induce the same map $\bA^{\ulambda}\{W\} \to \bA^{\mu}\{W\}$ for all finite dimensional $W$, and so $f=g$, as required.
\end{proof}

\subsection{Automatic algebraicity} \label{ss:auto}

Let $\Vec$ be the category of finite dimensional vector spaces. We regard $\bA^{\ulambda}$ as a (contravariant) functor from $\Vec$ to the category of sets; note that this functor is simply $W \mapsto \bS_{\ulambda}(W)^*$. If $f \colon \bA^{\ulambda} \to \bA^{\umu}$ is a map of $\GL$-varieties then $f$ is a natural transformation between functors. We say that natural transformations of this form are \defn{algebraic}. We show:

\begin{proposition}
Any natural transformation $\bA^{\ulambda} \to \bA^{\umu}$ is algebraic.
\end{proposition}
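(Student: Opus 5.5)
We will reduce to Proposition~\ref{prop:inv2}. A natural transformation $\eta \colon \bA^{\ulambda} \to \bA^{\umu}$ assigns to each finite dimensional $W$ a function of sets $\eta_W \colon \bS_{\ulambda}(W)^* \to \bS_{\umu}(W)^*$. These functions are compatible with pullback along all linear maps, and in particular along injections. We first extend $\eta$ to the countable universal homogeneous $\ulambda$-space $(V, \ul{\omega})$ and use it to produce a $G$-invariant functional on $\bS_{\umu}(V)$. Then we identify that functional, via $\gamma_{\umu}$, with an algebraic map $f$. Finally we check that $f$ agrees with $\eta$ on every finite $W$.

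\emph{Step 1: constructing $\theta$.} Write $V=\bigcup_n V_n$ as an increasing union of finite dimensional subspaces. Let $\ul{\omega}_n$ be the restriction of $\ul{\omega}$ to $V_n$. By naturality with respect to the inclusions $V_n \subset V_{n+1}$, the elements $\eta_{V_n}(\ul{\omega}_n) \in \bS_{\umu}(V_n)^*$ are compatible under restriction. They therefore glue to a functional $\theta \in \bS_{\umu}(V)^* = \varprojlim \bS_{\umu}(V_n)^*$, since $\bS_{\umu}$ commutes with directed unions. The same argument shows that $\theta$ does not depend on the exhaustion, because any two exhaustions are cofinal in each other. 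Moreover, for every finite $W \subset V$ the restriction of $\theta$ to $W$ equals $\eta_W(\ul{\omega}|_W)$.

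\emph{Step 2: $G$-invariance.} Let $\sigma \in G$ and let $W \subset V$ be finite. The map $\sigma|_W \colon W \to \sigma(W)$ is a linear isomorphism that pulls $\ul{\omega}|_{\sigma W}$ back to $\ul{\omega}|_W$. Naturality of $\eta$ for this isomorphism gives $(\sigma^*\theta)|_W = \theta|_W$. Since this holds for all finite $W$, we get $\sigma^*\theta = \theta$.

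\emph{Step 3: algebraicity.} By Proposition~\ref{prop:inv2}, there is a map of $\GL$-varieties $f \colon \bA^{\ulambda} \to \bA^{\umu}$ with $f_V(\ul{\omega}) = \theta$. Now let $(W, \ul{\eta}')$ be any finite $\ulambda$-space. By universality, there is an embedding $i \colon W \to V$ with $i^*\ul{\omega} = \ul{\eta}'$. Both $\eta$ and $f$ are natural with respect to $i$, with $f$ natural because it is a map of $\GL$-varieties, hence compatible with all linear maps. Therefore
\[
\eta_W(\ul{\eta}') = i^*\theta = f_W(\ul{\eta}').
\]
Hence $\eta = f$ on every object, and $\eta$ is algebraic.

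\emph{Main obstacle.} The delicate point is Step 1: making sense of evaluating the natural transformation on the infinite dimensional $V$, and checking that restriction to finite subspaces is compatible. I expect this to go through by the inverse-limit description above, since $\bA^{\umu}\{V\}$ is literally $\bS_{\umu}(V)^*$. A second point to check is that the argument of Proposition~\ref{prop:inv2} uses naturality of algebraic maps with respect to injective maps $W \to V$ with $V$ countable. This holds because algebraic maps are given by polynomial formulas natural in all linear maps.
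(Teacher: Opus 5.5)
Your proof is correct and follows essentially the same route as the paper: you extend the natural transformation to the countable universal space $V$ by gluing over finite-dimensional subspaces, obtain a $G$-invariant functional, match it with an algebraic map via Proposition~\ref{prop:inv2}, and conclude equality on every finite $W$ by universality. The paper packages this last step as injectivity of $\operatorname{Nat}(\bA^{\ulambda}, \bA^{\umu}) \to \Hom_G(\bS_{\umu}(V), k)$, and your Step~2 spells out the $G$-invariance check that the paper leaves implicit.
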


\begin{proof}
Suppose $f \colon \bA^{\ulambda} \to \bA^{\umu}$ is a natural transformation. Given a (perhaps infinite dimensional) vector space $W$, we define a map
\begin{displaymath}
f_W \colon \bA^{\ulambda}\{W\} \to \bA^{\umu}\{W\}
\end{displaymath}
as follows. Let $\ul{\eta}$ be a $\ulambda$-structure on $W$. For any finite dimensional subspace $U$ of $W$, we can apply $f_U$ to $\ul{\eta} \vert_U$ to obtain a $\umu$-structure $f_U(\ul{\eta} \vert_U)$ on $U$. The naturality of $f$ implies these structures are compatible as $U$ varies, and thus glue to a $\umu$-structure $f_W(\ul{\eta})$ on $W$. One easily verifies that $W \mapsto f_W$ is a natural transformation between functors defined on all vector spaces.

From the above, it follows that we have a map
\begin{displaymath}
\operatorname{Nat}(\bA^{\ulambda}, \bA^{\umu}) \to \Hom_G(\bS_{\umu}(V), k), \qquad f \mapsto f(\ul{\omega}),
\end{displaymath}
where the domain is the set of all natural transformations. The reasoning in the second paragraph of the proof of Proposition~\ref{prop:inv2} shows that this map is injective. Since the restriction of this map to the set of algebraic natural transformations is bijective (Proposition~\ref{prop:inv2}), we see that all natural transformations are algebraic.
\end{proof}

\begin{remark}
By pre-composing with duality on $\Vec$, we see that natural transformations $\bA^{\ulambda} \to \bA^{\umu}$ coincide with natural transformations $\bS_{\ulambda} \to \bS_{\umu}$. Thus the above proposition determines the latter kind of natural transformations too.
\end{remark}

There is an alternate formulation of the above result worth pointing out. A \defn{$\Vec$-functor} $\cC_{\ulambda} \to \cC_{\umu}$ is a functor that does not change the underlying vector space, that is, it sends a $\ulambda$-space $(V, \ul{\omega})$ to a $\umu$-space $(V, \ul{\eta})$. One easily sees that such functors correspond to natural transformations $\bA^{\ulambda} \to \bA^{\umu}$. We thus find:

\begin{corollary}
Any $\Vec$-functor $\cC_{\ulambda} \to \cC_{\umu}$ is algebraic, i.e., induced from a map of $\GL$-varieties $\bA^{\ulambda} \to \bA^{\umu}$.
\end{corollary}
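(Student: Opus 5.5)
The plan is to reduce the corollary directly to the preceding proposition. The only work is to make precise the correspondence between $\Vec$-functors $\cC_{\ulambda} \to \cC_{\umu}$ and natural transformations $\bA^{\ulambda} \to \bA^{\umu}$.

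Let $F \colon \cC_{\ulambda} \to \cC_{\umu}$ be a $\Vec$-functor. For a finite dimensional space $W$ and $\ul{\eta} \in \bA^{\ulambda}\{W\}$, write $F(W, \ul{\eta}) = (W, f_W(\ul{\eta}))$. This defines a map of sets $f_W \colon \bA^{\ulambda}\{W\} \to \bA^{\umu}\{W\}$. I read ``does not change the underlying vector space'' as saying that $F$ acts on morphisms by sending the morphism given by an injective linear map $i$ to the morphism given by the same $i$; I would state this convention explicitly, since it is what makes the correspondence work.

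Next I check naturality of $f$ with respect to injections. Let $i \colon U \to W$ be injective and $\ul{\eta}$ a $\ulambda$-structure on $W$. Then $i$ is a morphism $(U, i^*\ul{\eta}) \to (W, \ul{\eta})$ in $\cC_{\ulambda}$. Applying $F$, the map $i$ is a morphism $(U, f_U(i^*\ul{\eta})) \to (W, f_W(\ul{\eta}))$ in $\cC_{\umu}$, which says exactly that $i^* f_W(\ul{\eta}) = f_U(i^* \ul{\eta})$.

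Naturality with respect to arbitrary linear maps then follows. Any linear map of finite dimensional spaces factors as a surjection followed by an injection, and a surjection $p \colon U \to U'$ has a section $s$, which is injective. So for $\ul{\eta}'$ on $U'$ we have $\ul{\eta}' = s^* p^* \ul{\eta}'$. Naturality for $s$ gives $s^* f_U(p^*\ul{\eta}') = f_{U'}(\ul{\eta}')$. Applying $p^*$ to both sides still requires relating $p^* s^*$ to the identity, which is not immediate.

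This step is the main obstacle, and I would handle it as follows. Choose a complement, so that $U = U' \oplus K$ with $p$ the projection. The structure $p^*\ul{\eta}'$ is then invariant under every automorphism $g$ of $U$ that fixes $U'$ pointwise and preserves $K$ (so that $p \circ g = p$), as well as under automorphisms of the form $\mathrm{id} + (\text{map } U' \to K)$. Since each such $g$ is injective, naturality for $g$ shows that $f_U(p^*\ul{\eta}')$ is invariant under the group generated by these automorphisms. I would argue that any form on $\bS_{\umu}(U)$ invariant under this group is pulled back along $p$. For example, the torus scaling $K$ forces all weight components involving $K$ to vanish, since $\umu$ is pure and nonzero weights appear. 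This yields $f_U(p^*\ul{\eta}') = p^* f_{U'}(\ul{\eta}')$.

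With $f$ a natural transformation on all of $\Vec$, the preceding proposition gives a map of $\GL$-varieties $\bA^{\ulambda} \to \bA^{\umu}$ inducing $f$. Hence $F$ is induced by it, which proves the corollary.
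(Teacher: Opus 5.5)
Your proposal is correct and follows the paper's route. You identify $\Vec$-functors with natural transformations $\bA^{\ulambda} \to \bA^{\umu}$ (the paper only says ``one easily sees'' here) and then invoke the automatic algebraicity proposition. Your extra step, upgrading naturality along injections to naturality along surjections, is sound: the scalar torus on $K$ alone suffices, and purity of $\umu$ plays no role. You could also skip that step, because the proof of the proposition only uses naturality along injective maps (gluing over finite subspaces, embeddings into $V$, and automorphisms of $V$).
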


\subsection{Fixed points}

Given a $\GL$-variety $X$, let $\cM_{\ulambda}(X)$ denote the set of maps $\bA^{\ulambda} \to X$ of $\GL$-varieties. This set is naturally the $k$-points of a finite type scheme \cite[\S 2.6]{polygeom}, though this will not be relevant for us. Given a map $f \colon \bA^{\ulambda} \to X$, we can evaluate $f$ at $\ul{\omega}$ to obtain a $G$-invariant point in $X\{V\}$. We thus have a natural map
\begin{displaymath}
\iota \colon \cM_{\ulambda}(X) \to (X\{V\})^G
\end{displaymath}
We show:

\begin{proposition}
The map $\iota$ is an isomorphism.
\end{proposition}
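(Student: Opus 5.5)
We reduce to the case $X=\bA^{\umu}$, which is exactly Proposition~\ref{prop:inv2}. Since every $\GL$-variety is a closed $\GL$-subvariety of some $\bA^{\umu}$, fix a closed embedding $j \colon X \to \bA^{\umu}$ defined by a $\GL$-stable radical ideal $I \subset R_{\umu}$. Then $X\{V\}$ is the closed subset of $\bA^{\umu}\{V\}$ cut out by $I\{V\}$, and $\cM_{\ulambda}(X)$ is the subset of $\Hom(\bA^{\ulambda},\bA^{\umu})$ consisting of maps that factor through $X$. The map $\iota$ for $X$ is the restriction of $\gamma_{\umu}=\iota_{\bA^{\umu}}$. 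Both sides embed compatibly: $(X\{V\})^G \subset (\bA^{\umu}\{V\})^G = \Hom_G(\bS_{\umu}(V),k)$. Injectivity of $\iota$ is therefore immediate from Proposition~\ref{prop:inv2}.

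For surjectivity, take $x \in (X\{V\})^G$. By Proposition~\ref{prop:inv2} there is a map $f \colon \bA^{\ulambda} \to \bA^{\umu}$ with $f(\ul{\omega})=x$, and we must show $f$ factors through $X$. Since $X$ is reduced and closed, it suffices to show that for every finite dimensional $W$ and every $\ul{\eta} \in \bA^{\ulambda}\{W\}$ we have $f_W(\ul{\eta}) \in X\{W\}$. This pointwise criterion is valid because a map of $\GL$-varieties into $\bA^{\umu}$ factors through the reduced closed subvariety $X$ iff $f^*(I)=0$ in $R_{\ulambda}$, and an element of $R_{\ulambda}$ vanishing on all $\bA^{\ulambda}\{W\}$ with $W$ finite dimensional is zero.

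To check the pointwise condition, use universality: choose an embedding $i \colon W \to V$ with $i^*(\ul{\omega})=\ul{\eta}$. By naturality of $f$ (extended to all vector spaces as in the proof of the automatic algebraicity proposition), $f_W(\ul{\eta}) = f_W(i^*\ul{\omega}) = i^*(f_V(\ul{\omega})) = i^*(x)$. Since $X$ is a $\GL$-variety, it is functorial in $W$ contravariantly for injections, so restriction $\bA^{\umu}\{V\} \to \bA^{\umu}\{W\}$ carries $X\{V\}$ into $X\{W\}$. Hence $i^*(x) \in X\{W\}$, as required.

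The main obstacle is making this functoriality and the description of $X\{V\}$ for infinite dimensional $V$ precise. Concretely, we need that $X\{V\}$ consists of the points $y\in\bA^{\umu}\{V\}$ such that $y|_U \in X\{U\}$ for all finite dimensional $U \subset V$, i.e.\ that $X\{V\}$ is the inverse limit of the $X\{U\}$. I would prove this by noting that the equations $I\{V\}$ are the union over finite $U$ of the images of $I\{U\}$, since polynomial functors commute with directed unions. With this in hand, both the restriction step and the reducedness criterion are routine.
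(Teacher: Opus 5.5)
Your proof is correct, but it handles the closed subvariety $X$ differently from the paper.

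\textbf{The paper's route.} The paper argues formally. It writes $X$ as the fiber over $0$ of a map $\bA^{\umu} \to \bA^{\unu}$ that records the defining equations. It notes that both $\cM_{\ulambda}(-)$ and $X \mapsto (X\{V\})^G$ carry this to cartesian squares. It then applies Proposition~\ref{prop:inv2} to \emph{both} $\bA^{\umu}$ and $\bA^{\unu}$. Surjectivity at $X$ comes from injectivity of $\gamma_{\unu}$: the composite $\bA^{\ulambda} \to \bA^{\unu}$ sends $\ul{\omega}$ to $0$, hence is the zero map.

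\textbf{Your route.} You use Proposition~\ref{prop:inv2} only for $\bA^{\umu}$. You replace the $\bA^{\unu}$ step with a direct check that the lift $f$ kills the ideal $I$:
\begin{enumerate}
\item universality transports the condition $f_V(\ul{\omega}) \in X\{V\}$ to $f_W(\ul{\eta}) \in X\{W\}$ for every finite $\ulambda$-space $(W, \ul{\eta})$;
\item an element of $R_{\ulambda}$ vanishing on $\bA^{\ulambda}\{W\}$ for all finite-dimensional $W$ is zero.
\end{enumerate}
This is essentially the injectivity argument from the proof of Proposition~\ref{prop:inv2}, run by hand inside $X$.

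\textbf{What each buys.} The paper's version is shorter and purely formal once the affine case is known. However, it tacitly requires presenting $X$ as a fiber of a map to some $\bA^{\unu}$, i.e., cutting out $X$ (at least up to radical) by finitely many $\GL$-orbits of equations. It also needs some care with reduced versus scheme-theoretic fiber products. Your version works with an arbitrary $\GL$-stable ideal $I$ and avoids both issues.

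\textbf{Two small simplifications.} The inverse-limit description of $X\{V\}$ in your last paragraph is not needed. You only use $i^*(X\{V\}) \subset X\{W\}$, and this is immediate because the map $R_{\umu}\{W\} \to R_{\umu}\{V\}$ induced by $i$ carries $I\{W\}$ into $I\{V\}$. Also, reducedness of $X$ plays no role in your factoring criterion. What matters is that each element of $R_{\ulambda}$ lies in some $R_{\ulambda}\{k^n\}$, which is a polynomial ring over an infinite field.
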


\begin{proof}
When $X=\bA^{\umu}$, the assertion follows immediately from Proposition~\ref{prop:inv2}. We now treat a general $X$. Choose a cartesian square
\begin{displaymath}
\xymatrix{
X \ar[r] \ar[d] & 0 \ar[d] \\
\bA^{\umu} \ar[r]^f & \bA^{\unu} }
\end{displaymath}
One obtains such a square by realizing $X$ as a closed $\GL$-subvariety of $\bA^{\umu}$; the map $f$ is essentially recording the defining equations. Since $\cM_{\ulambda}(-)$ and $(-)^G$ are compatible with fiber products, we obtain cartesian squares
\begin{displaymath}
\xymatrix{
\cM_{\ulambda}(X) \ar[r] \ar[d] & 0 \ar[d] \\
\cM_{\ulambda}(\bA^{\umu}) \ar[r]^f & \cM_{\ulambda}(\bA^{\unu}) }
\qquad
\xymatrix{
(X\{V\})^G \ar[r] \ar[d] & 0 \ar[d] \\
(\bA^{\umu}\{V\})^G \ar[r]^f & (\bA^{\unu}\{V\})^G. }
\end{displaymath}
Moreover, $\iota$ gives a map between these squares. Since this map is an isomorphism at the vertices away from $X$, it is also an isomorphism at $X$, as required.
\end{proof}

The proposition shows that the $G$-fixed points on $X\{V\}$ are naturally the $k$-points of a finite type scheme.

\section{Strength and universality} \label{s:univ}

\subsection{Strength}

Let $R$ be a graded $k$-algebra with $R_0=k$ and $R_i=0$ for $i<0$. The \defn{strength} of a homogeneous element $f \in R_d$ is the minimum $s$ for which there exists an expression
\begin{displaymath}
f = \sum_{i=1}^s g_i h_i
\end{displaymath}
where $g_i$ and $h_i$ are homogeneous elements of degree $<d$. If no such expression exists, we say that $f$ has strength $\infty$. Note that the collection of finite strength elements is exactly the ideal $(R_+)^2$. More generally, we define the \emph{strength} of a collection of homogeneous elements to be the minimal strength of a non-trivial homogeneous linear combination.

We require the following two results related to strength.

\begin{theorem}[{\cite[Theorem~1.1]{stillman}}] \label{thm:stillman}
Let $V$ be a vector space and let $f_1, \ldots, f_r$ be homogeneous elements of $\cP(V)$ of infinite strength. Then $f_1, \ldots, f_r$ are algebraically independent.
\end{theorem}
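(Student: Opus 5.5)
We argue by induction on $r$, reducing to the case of finitely many variables and then to a statement about a single form in a polynomial ring. Suppose, for contradiction, that there is a nonzero polynomial $P \in k[y_1, \ldots, y_r]$ with $P(f_1, \ldots, f_r)=0$ in $\cP(V)$. First we pass to a polynomial ring. Each $f_i$ lies in the inverse limit of the polynomial rings $k[x_j]_{j \in J}$ over finite $J \subset I$, so each $f_i$ restricts to a form $f_i|_W$ on every finite dimensional subspace $W \subset V$. The relation $P(f_1, \ldots, f_r)=0$ then holds on every $W$. Infinite strength means that no nontrivial homogeneous linear combination is a finite sum $\sum g_j h_j$ of products of lower degree forms on $V$.

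The key intermediate claim is that the strength of $f|_W$ tends to infinity as $W$ grows. More precisely, if $f$ has infinite strength, then for every $s$ there is a finite dimensional $W$ such that $f|_{W'}$ has strength $>s$ for all $W' \supseteq W$. We prove this by a compactness argument. Suppose instead that $f|_W$ has strength $\le s$ for every $W$. The set of decompositions of $f|_W$ as a sum of $s$ products of fixed degrees is a nonempty variety $Z_W$ of finite type. The restriction maps $Z_{W'} \to Z_W$ form an inverse system indexed by a countable chain of subspaces exhausting $V$ (we may assume $\dim V$ is countable, or apply the same argument to a suitable countable subsystem). Over an algebraically closed field, an inverse limit of nonempty finite type varieties is nonempty, by taking images and using Chevalley's theorem and the uncountable/Mittag-Leffler trick. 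A point of $\varprojlim Z_W$ glues to a decomposition of $f$ of strength $\le s$ in $\cP(V)$. This contradicts infinite strength. The same argument applies uniformly to the finitely many nontrivial linear combinations in each degree, after projectivizing the coefficient space, which is compact in the Zariski sense. It therefore gives that the collection $f_1|_W, \ldots, f_r|_W$ has strength $>s$ for large $W$.

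We finish with Ananyan--Hochster. Their theorem (the key input of \cite{stillman}) says that for given degrees there is a bound $s_0$ such that any homogeneous forms $g_1, \ldots, g_r$ in a polynomial ring whose collective strength exceeds $s_0$ form a regular sequence, and in particular are algebraically independent. Choose $W$ so that the collection $f_i|_W$ has strength $>s_0$. Then $P(f_1|_W, \ldots, f_r|_W)=0$ forces $P=0$, which is a contradiction.

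The main obstacle is the compactness step: showing that infinite strength in the completed ring $\cP(V)$ passes to arbitrarily large strength on finite dimensional pieces. The delicate points are the inverse limit of varieties, which needs a countability or cardinality argument, and the uniformity over linear combinations. If the paper's setting has $V$ of uncountable dimension, we would first reduce to a countable dimensional subspace on which infinite strength still holds.
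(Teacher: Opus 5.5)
Your route is genuinely different from the paper's. The paper gives no proof of its own. It cites \cite[Theorem~1.1]{stillman}, and remarks that the statement is also an easy consequence of Theorem~\ref{thm:kaz}. For countable $V$, infinite collective strength makes $(V; f_1, \ldots, f_r)$ universal, so $(k^r; y_1^{d_1}, \ldots, y_r^{d_r})$ embeds, and any relation $P(f_1,\ldots,f_r)=0$ pulls back to $P(y_1^{d_1},\ldots,y_r^{d_r})=0$, forcing $P=0$. Either way, the passage from infinite strength in $\cP(V)$ to finite-dimensional information is taken as input. You instead try to derive that passage by compactness and then finish with Ananyan--Hochster.

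Several parts of your argument are fine: the monotonicity, the gluing, the projectivization of coefficients, and the final regular-sequence step. If $k$ is uncountable and $\dim V$ is countable, the whole argument goes through, because a countable decreasing chain of nonempty constructible sets then has nonempty intersection and one can lift level by level. The gap is in the compactness step when $k$ is countable.

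The principle you invoke is that an inverse limit of nonempty finite type $k$-varieties has a $k$-point. This is false when $k$ is countable, and the paper allows any algebraically closed $k$ of characteristic~$0$, e.g.\ $\overline{\mathbf{Q}}$. To see this, enumerate $k=\{a_1,a_2,\ldots\}$ and take $Z_n=\bA^1\setminus\{a_1,\ldots,a_n\}$ with the inclusion maps; each $Z_n$ is nonempty but the limit has no $k$-points.

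Passing to an uncountable algebraically closed $K\supset k$ does not repair this. Over $K$ you would obtain a nontrivial combination $\sum a_i f_i=\sum_{j\le s} g_j h_j$ in the $K$-version of $\cP(V)$. Descending a compatible infinite family of $K$-points back to $k$ is the same problem: the system $y_a(y_0-a)=1$, $a\in k$, is solvable in $K$ but not in $k$. Equivalently, you would need to show that infinite strength over $k$ persists after extending scalars to $K$, and you have not done so.

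So for countable $k$, your key claim needs a genuine argument specific to strength decompositions; formal compactness does not give it. The key claim is that a form of infinite strength in $\cP(V)$ has finite-dimensional restrictions of unbounded strength. Likewise, your proposed reduction from uncountable to countable $\dim V$ presupposes a compactness statement over an uncountable directed system of subspaces, which you have not justified. The paper only needs countable $V$, though.
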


\begin{theorem}[{\cite[Corollary~1.6]{KaZ}}] \label{thm:kaz}
Let $\ulambda=[(d_1), \ldots, (d_r)]$ with $d_i>0$ for each $i$, and let $(V, \ul{\omega})$ be a countable $\ulambda$-space. Then $(V, \ul{\omega})$ is universal if and only if $\omega_1, \ldots, \omega_r$ have infinite strength in $\cP(V)$.
\end{theorem}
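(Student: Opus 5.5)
The plan is to prove the two implications separately. The forward direction is an easy restriction argument. The reverse direction is where the real content lies; it is the universality theorem of Kazhdan--Ziegler.

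\emph{Universal $\Rightarrow$ infinite strength.} Suppose some non-trivial homogeneous combination $f=\sum_{d_i=d} c_i\omega_i$ has finite strength $s$, say $f=\sum_{j=1}^s g_jh_j$ with $\deg g_j,\deg h_j<d$. For any embedding $\iota\colon W\to V$ of a finite $\ulambda$-space $(W,\ul{\eta})$, pulling back gives $\sum c_i\eta_i=\sum_j \iota^*g_j\,\iota^*h_j$. This is a combination of strength $\le s$. So it suffices to exhibit a finite $\ulambda$-space in which every non-trivial combination of the equal-degree forms has strength $>s$. I would take $W=\bigoplus_{i=1}^r k^N$ and let $\eta_i=\sum_{a=1}^N x_{i,a}^{d_i}$ be a Fermat form on the $i$-th block. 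Then any non-trivial combination of equal-degree $\eta_i$ is a diagonal form in at least $N$ variables, so its singular locus has codimension $\ge N$. On the other hand, a form of strength $\le s$ is singular along the common zero locus of the $g_j,h_j$, which has codimension $\le 2s$. Taking $N>2s$ gives a finite $\ulambda$-space that cannot embed in $V$, contradicting universality.

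\emph{Infinite strength $\Rightarrow$ universal.} Let $(W,\ul{\eta})$ be finite with basis $w_1,\dots,w_n$. We need vectors $v_1,\dots,v_n\in V$ that are linearly independent and satisfy $\omega_i(\sum x_jv_j)=\eta_i(x)$ for each $i$. I would construct them one at a time. Suppose $v_1,\dots,v_{t-1}$ have been chosen. Expanding $\omega_i(v+\sum_{j<t}x_jv_j)$ in powers of $x$, the conditions on $v=v_t$ become a finite system of polynomial equations in $v$. Their top-degree parts are the restrictions of the $\omega_i$ (and of their polarizations with the earlier $v_j$, which have lower degree) to $V$. The equations are non-homogeneous, with prescribed constant terms.

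The key input I would establish first is a \emph{finite approximation} statement. Since $\cP(V)$ is the inverse limit of polynomial rings, I expect a compactness argument to show that for every $S$ there is a finite-dimensional $U\subset V$ on which the restricted collection $\{\omega_i|_U\}$ has strength $>S$. The point is that strength $\le S$ is a closed condition cut out by bounded-degree data, so failure at every finite stage would assemble to a finite-strength expression on $V$.

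Then I would apply Ananyan--Hochster in the form behind Theorem~\ref{thm:stillman}. A collection of sufficiently high strength is a regular sequence, and more strongly each subcollection defines a variety of the expected codimension whose intersection with a hyperplane is again of expected codimension. I would homogenize the system with an extra variable $z$. Its solution set in $\mathbf{P}(U\oplus k)$ has codimension at most the number of equations, while its part at infinity $z=0$ is cut out by the high-strength top-degree parts and has strictly smaller dimension. So there is an affine solution. Moreover, the solution set has dimension far exceeding $t-1$ once $S\gg 0$, so it is not contained in $\mathrm{span}(v_1,\dots,v_{t-1})$; choosing a solution outside this span keeps the $v_j$ independent.

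The main obstacle is this last step: controlling the strength of the mixed system of lower-degree polarization equations together with top-degree equations. The top-degree parts are just the $\omega_i|_U$ and so have high strength. The lower-degree equations must not destroy the dimension count, which will require choosing $S$ large relative to $n$, the $d_i$, and $r$, and using the quantitative Ananyan--Hochster bounds.
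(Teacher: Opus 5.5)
The paper does not prove this statement. It imports it from \cite{KaZ}, leaving the passage from finite to countable dimension to the reader. Your forward direction (Fermat forms on disjoint blocks, whose singular locus has codimension $\ge N$, against the locus $\{g_j=h_j=0\}$ of codimension $\le 2s$) is correct and complete. The reverse direction has a genuine gap. It sits exactly at the step you flag, and taking $S$ large does not fix it.

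When you solve for $v=v_t$ alone, the conditions are $\tilde\omega_i(v^{e},v_\alpha)=\tilde\eta_i(w_t^{e},w_\alpha)$ for $1\le e\le d_i$. Here $\tilde\omega_i$ is the polarization and $v_\alpha$ a multiset of earlier vectors. Each form $v\mapsto\tilde\omega_i(v^{e},v_\alpha)$ is the top-degree part of its own equation. So the part at infinity is cut out by all of these polarizations, not only by the $\omega_i|_U$. Their strength depends on the earlier choices, which you only required to avoid the previous span, and they can vanish identically. For example, take $q=\sum_{j\ge1}x_j^2$ on $V$ with basis $e_0,e_1,\dots$ (infinite strength), with target the hyperbolic plane $\eta=y_1y_2$. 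Nothing in your procedure prevents $v_1=e_0$. The second step then requires $b_q(e_0,v_2)=\tfrac12$, which is impossible. So the one-vector-at-a-time induction, as set up, fails regardless of how large $S$ is.

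The repair is to solve for $(v_1,\dots,v_n)\in V^n=k^n\otimes V$ simultaneously.

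\emph{The system.} The equations now say that the components of $[\ul{\omega}]_{k^n}$ take the values prescribed by $[\ul{\eta}]_{k^n}$. Their top-degree parts are exactly these components, so what you need is that their collective strength is at least that of $\ul{\omega}$ (the paper obtains the analogous fact for general $\lambda_i$ in Proposition~\ref{prop:univ} by a different route).

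\emph{The strength lemma.} For symmetric forms this is short. Write $\omega_i(\sum_j t_jv_j)=\sum_\alpha t^\alpha F^{(i)}_\alpha(v)$. Take a nontrivial combination $\sum c_{i,\alpha}F^{(i)}_\alpha$ over $i$ with $d_i=d$, and restrict it along $x\mapsto(s_1x,\dots,s_nx)$. This gives $\sum_i p_i(s)\,\omega_i(x)$ with $p_i(s)=\sum_\alpha c_{i,\alpha}\binom{d}{\alpha}s^\alpha$ (multinomial coefficients), and some $p_i\ne0$. For suitable $s$ this is a nontrivial combination of the $\omega_i$, and restriction does not raise strength.

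\emph{Finishing.} Your homogenization argument then works verbatim on $V'^n$, for a finite-dimensional $V'\subset V$ of high strength and large dimension. Independence of the $v_j$ follows from a dimension count: every component of the solution set has codimension at most the number $N$ of equations, while dependent tuples form a locus of codimension $\dim V'-n+1$.

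Your finite-approximation step also needs a real argument. Bounded strength is not in general a Zariski-closed condition. More to the point, assembling compatible decompositions $f|_{V_m}=\sum_j g^{(m)}_jh^{(m)}_j$ across $m$ is an inverse-limit-of-varieties problem. That requires a genuine compactness input, not just the observation that the data have bounded degree.
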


We note that the paper \cite{KaZ} only works in finite dimension, but the above theorem follows easily from the results stated there. We also note that Theorem~\ref{thm:stillman} is actually an easy consequence of Theorem~\ref{thm:kaz}, but \cite[Theorem~1.1]{stillman} is a slightly stronger result.

\subsection{The main result}

Let $\ulambda=[\lambda_1, \ldots, \lambda_r]$ be a pure tuple, let $m \in \bN$ satisfy $m \ge \ell(\lambda_i)$ for each $i$, let $U=k^m$, let $d_i=\vert \lambda_i \vert$, let $e_i=\dim{\bS_{\lambda_i}(U)}$, and let $\umu=[(d_1)^{e_1}, \ldots, (d_r)^{e_r}]$ as in \S \ref{ss:convert}. The following is the main result of \S \ref{s:univ}, which connects the universality of a $\ulambda$-space to strength of the associated symmetric forms.

\begin{proposition} \label{prop:univ}
Let $(V, \ul{\omega})$ be a countable $\ulambda$-space. The following are equivalent:
\begin{enumerate}
\item $(V, \ul{\omega})$ is universal.
\item The associated $\umu$-space $(V, [\ul{\omega}]_U)$ is universal.
\item The forms $[\ul{\omega}]_U$ have infinite strength in the ring $\cP(V)$.
\end{enumerate}
\end{proposition}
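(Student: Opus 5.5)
The equivalence (b)$\Leftrightarrow$(c) is immediate from Theorem~\ref{thm:kaz}, applied to the countable $\umu$-space $(V, [\ul{\omega}]_U)$. Here $\umu=[(d_1)^{e_1}, \ldots, (d_r)^{e_r}]$ consists of one-row partitions with $d_i>0$, since $\ulambda$ is pure. The forms $\omega_1, \ldots, \omega_r$ in that theorem are then exactly the forms making up $[\ul{\omega}]_U$. So the real content is (a)$\Leftrightarrow$(b), and I would obtain it from the functor $\Phi \colon \cC_{\ulambda} \to \cC_{\umu}$ of \S\ref{ss:convert}, $(W,\ul{\eta}) \mapsto (W,[\ul{\eta}]_U)$, extended to countable spaces. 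By Proposition~\ref{prop:full}, $\Phi$ is fully faithful.

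For (b)$\Rightarrow$(a), let $(W,\ul{\eta})$ be a finite $\ulambda$-space. By (b) there is an embedding $i \colon W \to V$ of $\umu$-spaces, $i^*[\ul{\omega}]_U = [\ul{\eta}]_U$. Because the construction $[-]_U$ is natural in $V$, we have $i^*[\ul{\omega}]_U=[i^*\ul{\omega}]_U$. The recovery formula $x \ast (\alpha \odot \omega) = \alpha(x)\omega$ from the proof of Proposition~\ref{prop:full}, which uses $m \ge \ell(\lambda_i)$, then gives $i^*\ul{\omega}=\ul{\eta}$. Hence $i$ is an embedding of $\ulambda$-spaces and $(V,\ul{\omega})$ is universal.

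For (a)$\Rightarrow$(b), we must embed an arbitrary finite $\umu$-space $(W,\ul{\theta})$ into $(V,[\ul{\omega}]_U)$, and $\ul{\theta}$ need not lie in the image of $\Phi$. This is the main obstacle. The idea is to show that every finite $\umu$-space embeds into $\Phi$ of some finite $\ulambda$-space; combined with universality of $(V,\ul{\omega})$ and functoriality of $\Phi$, this gives (b). Equivalently, by Theorem~\ref{thm:kaz} we could instead show that the forms $[\ul{\omega}]_U$ have infinite strength.

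My plan is to prove the strength statement directly. Suppose some nontrivial homogeneous combination $F=\sum_{i,j} c_{ij}[\omega_i]_{U,j}$ of degree $d$ had finite strength $s$ in $\cP(V)$. The combination corresponds to functionals $\alpha_i \in \bS_{\lambda_i}(U)^*$, with $F=\sum_i \alpha_i \odot \omega_i$ over those $i$ with $d_i=d$. Choose $x$ with $\alpha_{i_0}(x)\neq 0$ for some $i_0$. Applying $x \ast -$ recovers $\sum_i \alpha_i(x)\omega_i$, a nonzero combination of the $\lambda$-forms of degree $d$. Moreover, $x\ast(gh)$ of a product of lower-degree forms is a sum of products of lower-degree "tensor pieces". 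Thus the $\omega_i$ would be of bounded "tensor strength" (partition rank in the Schur-functor sense), uniformly in $V$.

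Then I would use universality (a) to derive a contradiction. Take a finite $\ulambda$-space $W$ in which $\sum_i \alpha_i(x)\eta_i$ has large partition rank; such $W$ exists because generic tensors in $\bS_{\lambda}(k^N)^*$ have partition rank growing with $N$. Embed $W$ into $V$; restriction does not increase rank, so $W$ would inherit rank at most $s'$, which is impossible. The remaining point to check is that finite strength of $F$ on $U\otimes V$ really does bound the partition rank of $x\ast F$ on $V$. I expect this to follow by restricting the decomposition along $\bS_{\lambda}(V)\to \Sym^d(U\otimes V)$.
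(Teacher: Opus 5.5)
Your treatment of (b)$\Leftrightarrow$(c) and (b)$\Rightarrow$(a) matches the paper: Theorem~\ref{thm:kaz}, and the recovery formula behind Proposition~\ref{prop:full}. The two routes differ only in (a)$\Rightarrow$(c).

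The paper never leaves symmetric forms. Since $\phi$ is $\GL(U)$-equivariant and the finite-strength ideal $I$ is $\GL(U)$-stable, $\phi^{-1}(I)$ contains an irreducible $a\otimes\bS_{\mu}(U)^*$. The paper then passes to the single $\mu$-space $(V,\eta)$ with $\eta=\sum a_i\omega_i$. This space is universal because universality survives discarding forms and linear substitution. It then contradicts Lemma~\ref{lem:strength}, whose test spaces are orthogonal sums of copies of one space, so the relevant form restricts to $x_1^d+\cdots+x_{2s}^d$, whose strength is known.

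You instead do three things:
\begin{itemize}
\item You isolate one isotypic piece with $x\ast -$. Here $\sum_i\alpha_i(x)\omega_i$ must run only over $i$ with $\lambda_i=\lambda_{i_0}$; the other terms die because distinct Cauchy summands are orthogonal.
\item You turn the strength bound on $U\otimes V$ into a partition-rank bound on $V$.
\item You get the lower bound from a dimension count: $\dim\bS_{\lambda}(k^N)$ grows like $N^d$, while $d$-linear forms of partition rank $\le r$ form a family of dimension $O(rN^{d-1})$.
\end{itemize}
This avoids the two universality lemmas and the Fermat computation. The cost is that you must define partition rank for $\lambda$-forms (say, via the $d$-linear form $V^{\otimes d}\to\bS_{\lambda}(V)\to k$) and prove a transfer inequality.

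That transfer inequality, which you leave open, is true. It does not follow by literally restricting along $\bS_{\lambda}(V)\to\Sym^d(U\otimes V)$: that map is not induced by a linear map $V\to U\otimes V$, so products do not pull back to products. Argue multilinearly instead:
\begin{itemize}
\item The polarization of each $g_jh_j$ has partition rank at most $2^d$.
\item The composite $V^{\otimes d}\to\bS_{\lambda}(V)\to\Sym^d(U\otimes V)\subset U^{\otimes d}\otimes V^{\otimes d}$ is natural in $V$. By Schur--Weyl it therefore has the form $v\mapsto\sum_{\sigma\in\fS_d}u_\sigma\otimes\sigma v$.
\item Expand each $u_\sigma$ into at most $m^d$ pure tensors. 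This writes the $d$-linear form of $x\ast F$ as a sum of at most $d!\,m^d$ slot-permuted, slotwise pullbacks of the polarization of $F$.
\end{itemize}
Hence the partition rank of $x\ast F$ is at most $d!\,m^d\,2^d\,s$.

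The partition-rank detour can also be skipped entirely, using the trick you already use. Let $W$ be the finite $\ulambda$-space in which $\eta_{i_0}$ is an orthogonal sum of $2(s+1)$ copies of a nonzero $\lambda_{i_0}$-form $\theta$, and $\eta_i=0$ for $i\ne i_0$. Restrict $F$ itself to $U\otimes W$. The restriction is $\alpha_{i_0}\odot\eta_{i_0}$, an orthogonal sum of copies of $\alpha_{i_0}\odot\theta$. This piece is nonzero because $\alpha_{i_0}\neq 0$ and $\theta\neq 0$. By Lemma~\ref{lem:str-sum} the restriction has strength $\ge s+1$. This contradicts the fact that restriction does not increase strength.
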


Applying Theorem~\ref{thm:stillman}, we obtain the following corollary:

\begin{corollary} \label{cor:univ}
If $(V, \ul{\omega})$ is universal then the forms $[\ul{\omega}]_U$ are algebraically independent.
\end{corollary}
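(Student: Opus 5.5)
The corollary follows by chaining Proposition~\ref{prop:univ} with Theorem~\ref{thm:stillman}; the only real work is checking that the hypotheses match.

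First, suppose $(V, \ul{\omega})$ is universal. By the implication (a)$\Rightarrow$(c) of Proposition~\ref{prop:univ}, the collection $[\ul{\omega}]_U$ of symmetric forms has infinite strength in $\cP(V)$. Here we use the standing hypothesis $m \ge \ell(\lambda_i)$ for each $i$, under which Proposition~\ref{prop:univ} was stated. The notion of strength for a collection is the minimal strength of a non-trivial homogeneous linear combination. In particular, each individual member of the tuple $[\ul{\omega}]_U$ is a trivial special case of such a combination, so it has infinite strength.

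Next, we apply Theorem~\ref{thm:stillman} to the finitely many forms in $[\ul{\omega}]_U$. There are $e_1 + \cdots + e_r$ of them, each homogeneous of positive degree $d_i$ since $\ulambda$ is pure. Each has infinite strength, so the theorem says they are algebraically independent in $\cP(V)$.

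There are two small points to watch. One is that the entries of $[\omega_i]_U$ depend on a choice of basis of $\bS_{\lambda_i}(U)$. However, algebraic independence is unaffected by an invertible linear change among forms of the same degree, so this choice does not matter. The other is whether the entries might fail to be distinct, or might be linearly dependent, so that the tuple could not be independent. This is ruled out by the strength condition on the collection: a non-trivial linear relation among them would be a non-trivial combination equal to $0$, which has strength $0$, not $\infty$. So the collection is linearly independent, and the statement is meaningful.

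I expect no genuine obstacle here. The substantive content lies in Proposition~\ref{prop:univ} (specifically (a)$\Rightarrow$(c)) and in \cite{stillman}, both of which we take as given.
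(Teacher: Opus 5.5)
Your argument is the paper's: (a)$\Rightarrow$(c) of Proposition~\ref{prop:univ} shows that the collection $[\ul{\omega}]_U$ has infinite strength, and Theorem~\ref{thm:stillman} then gives algebraic independence.

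One correction to how you invoke the theorem. Its hypothesis is the collective one: every non-trivial homogeneous linear combination has infinite strength. Equivalently, the images in $R_+/(R_+)^2$ are linearly independent, where $R=\cP(U\otimes V)$. You reduce this to ``each member has infinite strength'', and that is not sufficient, even together with the linear independence you check separately. For example, let $q$ be a quadric of infinite strength and let $x,y$ be independent linear forms. Then $f_1=q$, $f_2=q+x^2$, $f_3=q+xy$, $f_4=q+y^2$ are linearly independent and each has infinite strength, yet $(f_2-f_1)(f_4-f_1)=(f_3-f_1)^2$.

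You already have the collective condition from (c), so apply the theorem to it directly and drop the passage to individual members.
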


\begin{remark}
In \cite{BDDE}, a notion of strength for $\lambda$-forms is introduced and a direct analog of Theorem~\ref{thm:kaz} is established. Proposition~\ref{prop:univ} has a similar spirit to this result, but it is not exactly the same since it refers to the strength of the associated symmetric forms.
\end{remark}

We now begin working on the proof. If $\alpha$ and $\beta$ are $\lambda$-forms on $V$ and $W$ respectively, then we can make a $\lambda$-form on $V \oplus W$ via the composition
\begin{displaymath}
\bS_{\lambda}(V \oplus W) \to \bS_{\lambda}(V) \oplus \bS_{\lambda}(W) \to k,
\end{displaymath}
where the final map is the sum of $\alpha$ and $\beta$. We call this the \defn{orthogonal sum} of $\alpha$ and $\beta$.

\begin{lemma} \label{lem:str-sum}
Let $V_0$ be a vector space and let $f_0 \in \cP_d(V_0)$ be non-zero. Let $V=V_0^{\oplus 2s}$, and let $f \in \cP_d(V)$ be the orthogonal sum of $2s$ copies of $f_0$. Then $f$ has strength $\ge s$.
\end{lemma}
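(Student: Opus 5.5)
Suppose for contradiction that $f=\sum_{i=1}^{t} g_i h_i$ with $t<s$, where the $g_i,h_i$ are homogeneous of degree $<d$ (and hence of degree $\ge 1$, since $f$ is homogeneous of degree $d$ and we may discard constant factors). We may assume $d\ge 2$: in degree $1$ every nonzero linear form has infinite strength, so there is nothing to prove. The idea is to restrict $f$ to a suitable subspace on which all the $g_i$ vanish, and to show that the restriction of $f$ is still nonzero.

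First reduce to finite dimensions. The forms $f_0,g_i,h_i$ are determined by finitely many coefficients only after restriction to finite dimensional subspaces. Choose a finite dimensional subspace $W_0\subset V_0$ with $f_0|_{W_0}\neq 0$, and restrict everything to $W_0^{\oplus 2s}$. Restriction preserves the identity $f=\sum g_ih_i$ and the orthogonal sum structure, so we may assume $V_0$ is finite dimensional.

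Now choose a vector $v\in V_0$ with $f_0(v)\neq 0$; this is possible since $k$ is infinite and $f_0$ is nonzero. Consider the $2s$-dimensional subspace $L=kv^{\oplus 2s}\subset V$. With coordinates $y_1,\ldots,y_{2s}$ on $L$,
\[
f|_L=c\,(y_1^d+\cdots+y_{2s}^d),\qquad c=f_0(v)\neq 0 .
\]
The problem therefore reduces to showing that the Fermat form $F=y_1^d+\cdots+y_{2s}^d$ has strength $\ge s$. Suppose $F=\sum_{i=1}^t g_ih_i$ with $t<s$ and each $g_i$ of positive degree. Since $k$ is algebraically closed, the common zero locus $Z=V(g_1,\ldots,g_t)$ of these homogeneous forms in $k^{2s}$ has dimension $\ge 2s-t>s$. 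On the other hand $F$ vanishes identically on $Z$.

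We now bound the dimension of linear subspaces, and in fact of varieties, inside $V(F)$. The hypersurface $V(F)\subset k^{2s}$ is smooth away from the origin (as $d\ge 1$ in characteristic $0$), so its projectivization is a smooth hypersurface in $\mathbf{P}^{2s-1}$. This alone does not forbid $Z\subset V(F)$, since $V(F)$ itself has dimension $2s-1$. So the right argument must use more than vanishing on $Z$: it should use singularity. Every point of $Z$ is a singular point of the hypersurface $V(\sum g_ih_i)$ once we also intersect with $V(h_1,\ldots,h_t)$. Indeed, on $Y=V(g_1,\ldots,g_t,h_1,\ldots,h_t)$ the gradient $\nabla F=\sum(h_i\nabla g_i+g_i\nabla h_i)$ vanishes. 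But $Y$ has dimension $\ge 2s-2t>0$ and is a cone, so it contains a nonzero point. At such a point $\nabla F=(d y_j^{d-1})_j$ vanishes, which forces $y=0$, a contradiction.

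Hence $t\ge s$. Tracing back, $f$ has strength $\ge s$: otherwise its restriction to $L$ would give a decomposition of $c F$ of length $<s$, contradicting what we just proved. The main step is the dimension count for $Y$, which relies on the Krull principal ideal theorem over the algebraically closed field $k$.
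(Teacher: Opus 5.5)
Your proof is correct and follows the paper's route: you restrict to the span of $2s$ copies of a vector $v$ with $f_0(v)\neq 0$ to reduce to the Fermat form $y_1^d+\cdots+y_{2s}^d$. The paper simply cites as well known that this form has strength $s$, whereas you prove the needed lower bound with the standard singular-locus argument: a decomposition of length $t$ forces $\nabla F$ to vanish on the cone $Y$ of dimension $\ge 2s-2t$, while in characteristic $0$ the gradient $\nabla F$ vanishes only at the origin. Your exploratory paragraph about $Z$ and the smoothness of $V(F)$ is a dead end and can be deleted, as can the reduction to finite-dimensional $V_0$.
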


\begin{proof}
Let $w \in V_0$ be a vector such that $f_0(w) \ne 0$, let $W_0$ be the line spanned by $w$, and let $W=W_0^{\oplus 2s}$. The restriction of $f_0$ to $W_0$, in appropriate coordinates, is $x^d$, and so the restriction of $f$ to $W$ is $x_1^d+\cdots+x_{2s}^d$. It is well-known (see, e.g., \cite[Example~3.3]{BO}) that this form has strength $s$, and so $f$ has strength $\ge s$ as well.
\end{proof}

\begin{lemma} \label{lem:strength}
Let $\lambda$ be a non-empty partition, and assume $m \ge \ell(\lambda)$. Then for any $s \ge 0$, there exists a finite $\lambda$-space $(V, \omega)$ such that some component of $[\omega]_U$ has strength $\ge s$.
\end{lemma}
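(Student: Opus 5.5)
Take $(V_0,\omega_0)$ to be a finite $\lambda$-space with $\omega_0 \neq 0$. Let $(V,\omega)$ be the orthogonal sum of $2s$ copies of it, so $V=V_0^{\oplus 2s}$. The claim is that every component of $[\omega]_U$ is an orthogonal sum of $2s$ copies of a symmetric $d$-form on $V_0^m$. Lemma~\ref{lem:str-sum} will then give strength $\ge s$ for any component that is nonzero.

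First we need $\omega_0 \ne 0$, which is easy. Since $\lambda$ is non-empty, $\bS_\lambda(V_0)\neq 0$ once $\dim V_0 \ge \ell(\lambda)$. So any nonzero functional on $\bS_\lambda(V_0)$ will do, and $(V_0,\omega_0)$ is a finite $\lambda$-space.

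Next, the key compatibility: the construction $\omega \mapsto [\omega]_U$ turns orthogonal sums into orthogonal sums. We have $U\otimes V = (U\otimes V_0)^{\oplus 2s}$. The orthogonal sum $\omega$ is $\omega_0$ precomposed with the projection $\bS_\lambda(V)\to\bS_\lambda(V_0)$ on each summand. The projection $\Sym^d(U\otimes V)\to \bS_\lambda(U)\otimes\bS_\lambda(V)$ from the Cauchy decomposition is natural in $V$, so it is compatible with the projections and inclusions of the summands $V_0$. Using this naturality, we check that for each $\alpha$:
\begin{itemize}
\item the form $\alpha\odot\omega$ restricted to the $j$-th summand $U\otimes V_0$ is $\alpha\odot\omega_0$;
\item $\alpha\odot\omega$ kills every $d$-fold product involving two different summands.
\end{itemize}
The second point holds because $\bS_\lambda(V)\to\bS_\lambda(V_0)\oplus\cdots$ factors through the degree-$(d,0,\dots)$ multigraded pieces. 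Concretely, the map $\bS_\lambda(V) \to \bigoplus_j \bS_\lambda(V_0)$ is the sum of the pure multidegree components, and the Cauchy projection preserves the $\mathbf{G}_m^{2s}$-multigrading. Hence $\alpha\odot\omega$ is the orthogonal sum of $2s$ copies of $\alpha\odot\omega_0$.

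It remains to find an $\alpha$ with $\alpha\odot\omega_0\ne 0$. For this we use the identity $x\ast(\alpha\odot\omega_0)=\alpha(x)\,\omega_0$ from the proof of Proposition~\ref{prop:full}. Because $m\ge\ell(\lambda)$, the space $\bS_\lambda(U)$ is nonzero, so we can pick $x$ and $\alpha$ with $\alpha(x)=1$. The identity then gives $x\ast(\alpha\odot\omega_0)=\omega_0\neq0$, so $\alpha\odot\omega_0\ne0$.

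We may take $\alpha$ to be a basis dual vector, so that $\alpha\odot\omega$ is a component of $[\omega]_U$. Not every basis dual vector need work, but $\alpha \mapsto \alpha\odot\omega_0$ is linear and not identically zero. Hence some member of any basis of $\bS_\lambda(U)^*$ gives a nonzero form, which suffices since the statement only asks for \emph{some} component. Lemma~\ref{lem:str-sum} then gives strength $\ge s$ for that component.

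The main obstacle is the compatibility step: checking cleanly that the Cauchy projection respects orthogonal sums and produces no cross terms. I expect the naturality/multigrading argument to handle it.
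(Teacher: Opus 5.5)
Your proposal is correct and follows the paper's own argument. In both, you take the orthogonal sum of $2s$ copies of a finite $\lambda$-space with $\omega_0 \neq 0$, choose $\alpha$ with $\alpha\odot\omega_0 \neq 0$, note that $\alpha\odot\omega$ is the orthogonal sum of $2s$ copies of $\alpha\odot\omega_0$, and apply Lemma~\ref{lem:str-sum}. The extra details you give are sound and fill in steps the paper leaves implicit: the torus-weight argument ruling out cross terms, the identity $x\ast(\alpha\odot\omega_0)=\alpha(x)\,\omega_0$ to produce a good $\alpha$, and the linearity argument that some basis dual vector works.
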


\begin{proof}
Let $(V_0, \omega_0)$ be a finite $\lambda$-space with $\omega_0 \ne 0$. Then $\alpha \odot \omega_0$ is non-zero for some $\alpha \in \bS_{\lambda}(U)^*$. Let $(V,\omega)$ be the orthogonal sum of $2s$ copies of $(V_0, \omega_0)$. Then $\alpha \odot \omega$ is an orthogonal sum of $2s$ copies of $\alpha \odot \omega_0$, and thus has strength $\ge s$ by Lemma~\ref{lem:str-sum}, as required.
\end{proof}

We are now ready to prove the proposition.

\begin{proof}[Proof of Proposition~\ref{prop:univ}]
(a) $\Rightarrow$ (c). We begin with two preliminary observations. First, if $(W, \ul{\eta})$ is a universal $\ulambda$-space then $(W, \eta_1)$ is a universal $\lambda_1$-space \cite[Lemma~3.4]{homoten}. Second, if $\lambda_1=\cdots=\lambda_s$ then $\GL_s$ acts on the category $\hat{\cC}_{\ulambda}$ by making a linear substituion in the first $s$ forms. This action clearly preserves universal spaces. In particular, if $(W, \ul{\eta})$ is a universal $\ulambda$-space then $(W, a_1 \eta_1+\cdots+a_s \eta_s)$ is a universal $\lambda_1$-space provided $(a_1, \ldots, a_s)$ is not the zero vector.

Now, suppose $(V, \ul{\omega})$ is universal but $[\ul{\omega}]_U$ has finite strength. We will derive a contradiction. We have a natural map
\begin{displaymath}
\phi \colon \bigoplus_{i=1}^r \bS_{\lambda_i}(U)^* \to \cP(U \otimes V)
\end{displaymath}
given by $\alpha \mapsto \alpha \odot \omega_i$ on the $i$th summand (see \S \ref{ss:convert}). It will be convenient to write this in a slightly different form. Write $\bS_{\ulambda}=\bigoplus_{\mu} W_{\mu} \otimes \bS_{\mu}$ where $W_{\mu}$ is a multiplicity space; thus $\dim{W_{\mu}}$ is the number of times $\mu$ occurs in $\ulambda$. Then $\phi$ takes the form
\begin{displaymath}
\phi \colon \bigoplus_{\mu} W_{\mu} \otimes \bS_{\mu}(U)^* \to \cP(U \otimes V).
\end{displaymath}
Let $I \subset \cP(U \otimes V)$ be the ideal of finite strength elements. It is clear that $I$ is $\GL(U)$-stable and that $\phi$ is $\GL(U)$-equivariant. Now, since $[\ul{\omega}]_U$ has finite strength, it follows that $\phi^{-1}(I)$ is non-zero. Since it is a $\GL(U)$-subrepresentation, it follows that there is some $\mu$ and some non-zero element $a \in W_{\mu}$ such that $a \otimes \bS_{\mu}(U)^*$ belongs to $\phi^{-1}(I)$. Reindexing if necessary, suppose that $\lambda_1=\cdots=\lambda_s=\mu$ and $\lambda_i \ne \mu$ for $i>s$; thus $W_{\mu}=k^s$. Put $\eta=a_1 \omega_1+\cdots+a_s \omega_s$. We thus see that all members of $[\eta]_U$ have finite strength. Since $(V, \eta)$ is a universal $\mu$-space, this contradicts Lemma~\ref{lem:strength}.

(b) $\Rightarrow$ (a). Suppose $(V, [\ul{\omega}]_U)$ is universal and let $(W, \ul{\eta})$ be a finite $\ulambda$-space. By assumption, we have an embedding $(W, [\ul{\eta}]_U) \to (V, [\ul{\omega}]_U)$ of $\umu$-spaces. Since the functor $\cC_{\ulambda} \to \cC_{\umu}$ is full (Proposition~\ref{prop:full}), it follows that there is an embedding $(W, \ul{\eta}) \to (V, \ul{\omega})$ of $\ulambda$-spaces. Thus $(V, \ul{\omega})$ is universal, as required.

(b) $\Leftrightarrow$ (c) is Theorem~\ref{thm:kaz}.
\end{proof}

\section{The main theorem} \label{s:rings}

Let $\ulambda=[\lambda_1, \ldots, \lambda_r]$ be a pure tuple, let $(V, \ul{\omega})$ be the countable universal homogeneous $\ulambda$-space, and let $G$ be its automorphism group. We now aim to compute the ring of $G$-invariant forms on $V^m$. We assume for simplicity that $m \ge \ell(\lambda_i)$ for each $i$. Put $U=k^m$, so that $V^m$ is identified with $U \otimes V$. Recall (\S \ref{ss:convert}) that we have a tuple of symmetric forms $[\ul{\omega}]_U$ on $U \otimes V$. These forms are clearly $G$-invariant. More canonically, we have a natural linear map
\begin{displaymath}
\bS_{\ulambda}(U)^* \to \cP(U \otimes V)^G
\end{displaymath}
which, by adjunction, induces a homomorphism
\begin{equation} \label{eq:ringmap}
\Sym(\bS_{\ulambda}(U)^*) \to \cP(U \otimes V)^G
\end{equation}
of graded rings. The following is our main result.

\begin{theorem} \label{thm:main}
The invariant ring $\cP(V^m)^G$ is the polynomial ring on the forms $[\ul{\omega}]_U$. More canonically, the map \eqref{eq:ringmap} is an isomorphism.
\end{theorem}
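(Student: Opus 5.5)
Injectivity of \eqref{eq:ringmap} is Corollary~\ref{cor:univ}. The map sends a basis of $\bS_{\ulambda}(U)^*$ to the components of $[\ul{\omega}]_U$. Since $(V,\ul{\omega})$ is universal, these forms are algebraically independent. Hence $\Sym(\bS_{\ulambda}(U)^*)$ maps injectively into $\cP(U \otimes V)^G$. The real content is surjectivity, which I will prove degree by degree by a dimension count.

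Fix $d$. The degree $d$ piece of the target is $\cP_d(U\otimes V)^G = \Hom_G(\Sym^d(U \otimes V), k)$. By the Cauchy decomposition,
\begin{displaymath}
\Sym^d(U\otimes V) = \bigoplus_{|\nu|=d} \bS_{\nu}(U) \otimes \bS_{\nu}(V),
\end{displaymath}
with $G$ acting only on the second factor. This gives
\begin{displaymath}
\cP_d(U\otimes V)^G = \bigoplus_{|\nu|=d} \bS_{\nu}(U)^* \otimes \Hom_G(\bS_{\nu}(V),k).
\end{displaymath}
I identify the invariant functionals with Proposition~\ref{prop:inv2}: $\Hom_G(\bS_\nu(V),k) \cong \Hom(\bA^{\ulambda},\bA^{\nu}) = \Hom_{\mathrm{poly}}(\bS_\nu, \Sym(\bS_{\ulambda}))$. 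Summing over $\nu$ and using Schur--Weyl, the multiplicity of $\bS_\nu$ in $\Sym(\bS_{\ulambda})_d$, evaluated at $U$, yields
\begin{displaymath}
\dim \cP_d(U\otimes V)^G = \sum_{\nu} \dim \bS_\nu(U) \cdot [\Sym(\bS_{\ulambda}) : \bS_\nu]_d = \dim \Sym(\bS_{\ulambda}(U))_d.
\end{displaymath}
Here the grading on $\Sym(\bS_{\ulambda})$ is the polynomial-functor degree. This matches the grading on the source, where $\bS_{\lambda_i}(U)^*$ sits in degree $d_i$. So the source and target of \eqref{eq:ringmap} have the same finite dimension in each degree (dualizing $\bS_{\ulambda}(U)$ does not change dimensions). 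An injective map between such spaces is an isomorphism, which proves the theorem.

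The main obstacle is the bookkeeping in this dimension count. One must make sure $\Hom_G(\bS_\nu(V),k)$ is finite dimensional; this follows from Proposition~\ref{prop:inv1}. One must also check that the $\bS_\nu(V)$ with $\ell(\nu)>m$ contribute nothing on the $U$ side, and they vanish automatically since $\bS_\nu(U)=0$. I also need to confirm that the identification via Proposition~\ref{prop:inv2} is compatible with the degree grading. The hypothesis $m\ge\ell(\lambda_i)$ is used only through Proposition~\ref{prop:univ}, to get injectivity. If dimensions were not directly comparable, I would instead trace through Proposition~\ref{prop:inv2}. That route would show every invariant functional on $\bS_\nu(V)$ comes from a polynomial expression in the $\omega_i$, hence from a polynomial in the components of $[\ul{\omega}]_U$. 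This would give surjectivity directly.
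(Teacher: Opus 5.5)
Your proposal is correct and takes essentially the same approach as the paper: injectivity comes from Corollary~\ref{cor:univ}, and surjectivity from a degree-wise dimension count that combines the Cauchy decomposition with Proposition~\ref{prop:inv2}. The paper packages the count as Lemma~\ref{lem:main1}, applying Proposition~\ref{prop:inv2} to $\bA(\Phi_d)$ and then Lemma~\ref{lem:cauchy}, whereas you apply it to each $\bS_{\nu}$ separately; this is the same computation.
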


We note that if $r=1$ and $\lambda_1=(n)$ then this theorem specializes to Theorem~\ref{mainthm}. We begin with some lemmas. Let $\Phi_d$ be the polynomial functor $\Sym^d(U \otimes -)$.

\begin{lemma} \label{lem:cauchy}
For any degree $d$ polynomial functor $\Psi$, we have a natural isomorphism
\begin{displaymath}
\Hom(\Phi_d, \Psi) = \Psi(U^*).
\end{displaymath}
\end{lemma}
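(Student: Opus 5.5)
The plan is to identify $\Phi_d$ as a representable functor, via the Cauchy decomposition followed by Schur--Weyl duality, and then apply Yoneda. By the Cauchy decomposition we have
\begin{displaymath}
\Phi_d = \Sym^d(U \otimes -) = \bigoplus_{\vert \mu \vert = d} \bS_{\mu}(U) \otimes \bS_{\mu},
\end{displaymath}
naturally in $U$ and in the argument. Under Schur--Weyl duality, the degree $d$ polynomial functors correspond to $\fS_d$-modules via $\Psi \mapsto \Psi(k^{\infty})$-multilinear part, or equivalently $\Psi = (M \otimes (-)^{\otimes d})_{\fS_d}$. In this language $\Phi_d$ corresponds to the $\fS_d$-module $(U^{\otimes d})$, that is, $\Phi_d = ((U \otimes -)^{\otimes d})_{\fS_d} = (U^{\otimes d} \otimes (-)^{\otimes d})_{\fS_d}$.

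For the main computation, note that if $\Psi$ corresponds to the $\fS_d$-module $N$, then by the equivalence we get
\begin{displaymath}
\Hom(\Phi_d, \Psi) = \Hom_{\fS_d}(U^{\otimes d}, N) = \big((U^*)^{\otimes d} \otimes N\big)^{\fS_d}.
\end{displaymath}
Since we are in characteristic~0, invariants and coinvariants agree, so this equals $\big((U^*)^{\otimes d} \otimes N\big)_{\fS_d} = \Psi(U^*)$, which is the desired identification.

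As a check, or as an alternative route, one can work summand by summand. Schur's lemma in the semisimple category of polynomial functors gives
\begin{displaymath}
\Hom(\Phi_d, \Psi) = \bigoplus_{\mu} \bS_{\mu}(U)^* \otimes \Hom(\bS_{\mu}, \Psi).
\end{displaymath}
Writing $\Psi = \bigoplus_{\mu} M_{\mu} \otimes \bS_{\mu}$ with multiplicity spaces $M_{\mu}$, this becomes $\bigoplus_{\mu} \bS_{\mu}(U)^* \otimes M_{\mu}$. We then use $\bS_{\mu}(U)^* \cong \bS_{\mu}(U^*)$ to conclude that this is $\Psi(U^*)$.

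The main obstacle is naturality, both in $\Psi$ and in $U$ (with $\GL(U)$-equivariance, which is likely what gets used later). The summand-wise argument obscures this, so I would present the map intrinsically. Given $\psi \colon \Phi_d \to \Psi$, evaluate it at $U^*$ to get $\Sym^d(U \otimes U^*) \to \Psi(U^*)$. Then apply this to the element $\mathrm{id}^d$, where $\mathrm{id} \in U \otimes U^*$ is the canonical element. Yoneda-style, this is inverse to the map sending $x \in \Psi(U^*)$ to the transformation $\Sym^d(U \otimes W) \to \Psi(W)$ obtained by regarding $U \otimes W = \Hom(U^*, W)$, and sending $\phi^d \mapsto \Psi(\phi)(x)$. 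This extends linearly because $\Psi$ has degree $d$, since the polarization of $\phi \mapsto \Psi(\phi)(x)$ is a degree $d$ homogeneous polynomial map. Verifying that these two constructions are mutually inverse reduces, via the computation above, to a dimension count together with injectivity, and that last step is the only routine check remaining.
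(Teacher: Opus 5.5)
Your proposal is correct. Your ``alternative route'' is precisely the paper's proof: the paper writes $\Psi=\bigoplus_{\lambda} W_{\lambda}\otimes\bS_{\lambda}$ and uses the Cauchy decomposition $\Phi_d=\bigoplus_{\lambda}\bS_{\lambda}(U)\otimes\bS_{\lambda}$ together with Schur's lemma to get $\bigoplus_{\lambda}\bS_{\lambda}(U)^*\otimes W_{\lambda}$. It then identifies this with $\Psi(U^*)$ and leaves naturality implicit.

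Your main argument is genuinely different. You pass through the Schur--Weyl equivalence, under which $\Phi_d$ corresponds to the permutation module $U^{\otimes d}$. You then conclude with $\Hom_{\fS_d}(U^{\otimes d},N)=((U^*)^{\otimes d}\otimes N)^{\fS_d}$ and the characteristic~$0$ identification of invariants with coinvariants. This avoids both the Cauchy decomposition and the isomorphism $\bS_{\lambda}(U)^*\cong\bS_{\lambda}(U^*)$. Every step is canonical, so naturality in $\Psi$ and $\GL(U)$-equivariance come for free.

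Your Yoneda-style description is stronger still and is a complete proof on its own; you do not need the dimension count you mention. Write $T_x$ for the transformation attached to $x\in\Psi(U^*)$. Then $T_x(\mathrm{id}^d)=\Psi(\mathrm{id}_{U^*})(x)=x$. Conversely, for $\phi\colon U^*\to W$, naturality of $\psi$ gives $\Psi(\phi)(\psi_{U^*}(\mathrm{id}^d))=\psi_W(\Phi_d(\phi)(\mathrm{id}^d))=\psi_W(\phi^d)$. Hence $T_{\psi_{U^*}(\mathrm{id}^d)}$ agrees with $\psi$ on the elements $\phi^d$, which span $\Sym^d(U\otimes W)$ in characteristic~$0$. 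This direct check is also preferable because a bare dimension count does not apply when $\Psi$ has infinite multiplicities, and the lemma is stated for arbitrary $\Psi$.

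For the paper's own purposes the extra canonicity is not needed. Lemma~\ref{lem:main1} and Theorem~\ref{thm:main} only use equality of dimensions of graded pieces, which is why the short decomposition argument suffices there.
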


\begin{proof}
Write $\Psi = \bigoplus_{\lambda} W_{\lambda} \otimes \bS_{\lambda}$, where the sum is over partitions $\lambda$ of $d$ and $W_{\lambda}$ is a multiplicity space. By the Cauchy decomposition, we have $\Phi_d = \bigoplus_{\lambda} \bS_{\lambda}(U) \otimes \bS_{\lambda}$. We thus find
\begin{displaymath}
\Hom(\Phi_d, \Psi) = \bigoplus_{\lambda} \left( \bS_{\lambda}(U)^* \otimes W_{\lambda} \right) = \Psi(U^*),
\end{displaymath}
as required.
\end{proof}

\begin{lemma} \label{lem:main1}
$\cP(U \otimes V)^G$ is isomorphic to $\Sym(\bS_{\ulambda}(U^*))$ as a graded vector space.
\end{lemma}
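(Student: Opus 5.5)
We work one degree at a time. The degree $d$ piece of $\cP(U \otimes V)$ is the linear dual of $\Sym^d(U \otimes V)=\Phi_d(V)$. Hence
\begin{displaymath}
\cP_d(U \otimes V)^G = \Hom_G(\Phi_d(V), k).
\end{displaymath}
Using the Cauchy decomposition $\Phi_d = \bigoplus_{\mu \vdash d} \bS_{\mu}(U) \otimes \bS_{\mu}$, this becomes
\begin{displaymath}
\Hom_G(\Phi_d(V), k) = \bigoplus_{\mu \vdash d} \bS_{\mu}(U)^* \otimes \Hom_G(\bS_{\mu}(V), k),
\end{displaymath}
where $G$ acts trivially on the multiplicity space $\bS_{\mu}(U)$. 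We need $\bS_{\mu}(U)$ to be finite dimensional, which holds since $U=k^m$, so the identification is harmless. We also need $\Hom_G$ to commute with this finite direct sum, which it does.

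Next we apply Proposition~\ref{prop:inv2}, which gives $\Hom_G(\bS_{\mu}(V), k) \cong \Hom(\bA^{\ulambda}, \bA^{\mu})$. The proof of that proposition identifies the right-hand side with $\Hom(\bS_{\mu}, \Sym(\bS_{\ulambda}))$ in the category of polynomial functors. Since $\mu \vdash d$, only the degree $d$ part matters, so this equals $\Hom(\bS_{\mu}, \Sym(\bS_{\ulambda})_d)$. Assembling over $\mu$, we get
\begin{displaymath}
\cP_d(U \otimes V)^G \cong \bigoplus_{\mu \vdash d} \bS_{\mu}(U)^* \otimes \Hom(\bS_{\mu}, \Sym(\bS_{\ulambda})_d) = \Hom(\Phi_d, \Sym(\bS_{\ulambda})_d).
\end{displaymath}
The last equality is the Cauchy decomposition once more. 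By Lemma~\ref{lem:cauchy} with $\Psi = \Sym(\bS_{\ulambda})_d$, which is a degree $d$ polynomial functor, the right-hand side equals $\Sym(\bS_{\ulambda})_d(U^*) = \Sym^d$-part of $\Sym(\bS_{\ulambda}(U^*))$. Summing over $d$ gives the claimed isomorphism of graded vector spaces.

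The only point needing care is that a polynomial functor evaluated at $U^*$ commutes with the relevant constructions. Concretely, $\Sym(\bS_{\ulambda})(U^*)=\Sym(\bS_{\ulambda}(U^*))$, which is immediate since $\Sym$ of polynomial functors is computed pointwise. We also need each graded piece to be finite dimensional, so that no completion issues arise when passing to duals; this is clear since $\Sym(\bS_{\ulambda})_d$ has finitely many summands because $\ulambda$ is pure.

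I expect the main (mild) obstacle to be the bookkeeping of duals. Proposition~\ref{prop:inv2} is stated for $\Hom_G(\bS_{\mu}(V),k)$, whereas $\cP_d$ involves the dual of $\Sym^d$, which decomposes over $\bS_{\mu}(U)\otimes\bS_{\mu}(V)$. One must check that the $U$-multiplicity appears as $\bS_{\mu}(U)^*$, which matches $\Hom(\Phi_d,\Psi)$ in Lemma~\ref{lem:cauchy}, rather than as $\bS_{\mu}(U)$. Since only an abstract isomorphism of graded vector spaces is claimed, any non-canonical identification $\bS_{\mu}(U)^*\cong\bS_{\mu}(U^*)$ suffices.
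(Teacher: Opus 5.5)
Your proof is correct and is essentially the paper's argument. The paper applies Proposition~\ref{prop:inv2} to the whole tuple underlying $\Phi_d$ and then uses the $\Sym$-adjunction and Lemma~\ref{lem:cauchy}; you instead split $\Phi_d$ by the Cauchy decomposition first and reassemble afterwards, which is the same computation. One small slip: $\Sym(\bS_{\ulambda})_d(U^*)$ is the degree-$d$ piece of $\Sym(\bS_{\ulambda}(U^*))$ for the grading that puts $\bS_{\lambda_i}(U^*)$ in degree $\vert\lambda_i\vert$, not its $\Sym^d$-part.
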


\begin{proof}
First, observe that
\begin{displaymath}
\cP_d(U \otimes V)^G = \Hom_G(\Phi_d(V), k).
\end{displaymath}
Let $R$ be the $\GL$-algebra $\Sym(\Phi_d(\bV))$ and let $\bA(\Phi_d)$ be its spectrum; note that this is isomorphic to $\bA^{\umu}$ for an appropriate tuple $\umu$. By Proposition~\ref{prop:inv2}, we thus have
\begin{displaymath}
\cP_d(U \otimes V)^G = \Hom_{\text{$\GL$-var}}(\bA^{\ulambda}, \bA(\Phi_d)).
\end{displaymath}
Now, we have
\begin{displaymath}
\Hom_{\text{$\GL$-var}}(\bA^{\ulambda}, \bA(\Phi_d)) =
\Hom_{\text{$\GL$-alg}}(R, R_{\ulambda}) = \Hom_{\GL}(\Phi_d(\bV), R_{\ulambda}) = R_{\ulambda,d}\{U^*\},
\end{displaymath}
where $R_{\ulambda}=\Sym(\bS_{\ulambda}(\bV))$ and $R_{\ulambda,d}$ denotes its degree $d$ piece. In the final step we use Lemma~\ref{lem:cauchy}. The result thus follows by summing over $d$.
\end{proof}

We are now ready to prove the theorem.

\begin{proof}[Proof of Theorem~\ref{thm:main}]
The map \eqref{eq:ringmap} is injective since the forms $[\ul{\omega}]_U$ are algebraically independent (Corollary~\ref{cor:univ}). Since the graded pieces of the source and target have the same finite dimension (Lemma~\ref{lem:main1}), it is therefore an isomorphism.
\end{proof}

\begin{remark}
The automorphism groups of universal homogeneous tensor spaces are possibly the most important linearly oligomorphic groups, and the ones most similar to classical groups, but many others exists; see, e.g., \cite{homoten2} and \cite{biquad2}. It would be interesting to compute the invariant rings in other cases.
\end{remark}

\end{document}